\newcommand{\eps}{\varepsilon}
\newcommand{\R}{\mathbb R}
\newcommand{\N}{\mathbb N}
\newcommand{\then}{\Longrightarrow}
\newcommand{\J}{{\cal J}}
\newcommand{\M}{{\cal M}}
\newcommand{\enne}{{\cal N}}
\newcommand{\er}{{\cal R}}
\newcommand{\T}{{\cal T}}
\newcommand{\bu}{\mathbf{u}}
\newcommand{\bv}{\mathbf{v}}
\newcommand\meas{{\rm meas}}
\newcommand{\ws}{\widetilde{s}}
\DeclareMathOperator{\codim}{codim}
\DeclareMathOperator*{\esssup}{ess\; sup}
\newtheorem{corollary}{Corollary}[section]
\newtheorem{theorem}[corollary]{Theorem}
\newtheorem{lemma}[corollary]{Lemma}
\newtheorem{proposition}[corollary]{Proposition}
\theoremstyle{definition}
\newtheorem{definition}[corollary]{Definition}
\newtheorem{remark}[corollary]{Remark}
\newtheorem{example}[corollary]{Example}
\numberwithin{equation}{section}
\begin{document}

\title{{\bf Nontrivial solutions for a class of\\
 gradient--type quasilinear elliptic systems}
\footnote{The research that led to the present paper was partially supported 
by MIUR--PRIN project ``Qualitative and quantitative aspects of nonlinear PDEs'' 
(2017JPCAPN\underline{\ }005),
{\sl Fondi di Ricerca di Ateneo} 2017/18 ``Problemi differenziali non lineari''  }}

\author{Anna Maria Candela and Caterina Sportelli\\
{\small Dipartimento di Matematica}\\
{\small Universit\`a degli Studi di Bari Aldo Moro} \\
{\small Via E. Orabona 4, 70125 Bari, Italy}\\
{\small \it annamaria.candela@uniba.it}\\
{\small \it caterina.sportelli@uniba.it}}
\date{}

\maketitle

\begin{abstract}%
The aim of this paper is investigating the existence of weak bounded solutions 
of the gradient--type quasilinear elliptic system
\[
(P)\qquad \left\{
\begin{array}{ll}
- {\rm div} ( a_i(x, u_i, \nabla u_i) ) 
+ A_{i, t} (x, u_i, \nabla u_i) = G_i(x, \bu) &\hbox{ in $\Omega$}\\ [5pt] 
\quad\qquad\qquad\qquad\qquad \mbox{ for }\;  i\in\{1,\dots,m\},\\[5pt]
\bu = 0 &\hbox{ on $\partial\Omega$,}
\end{array}
\right.
\]
with $m\geq 2$ and $\bu=(u_1,\dots, u_{m})$, where 
$\Omega\subset\R^N$ is an open bounded domain and 
some functions $A_i:\Omega\times\R\times\R^N\rightarrow\R$, $i\in\{1,\dots,m\}$,
and $G:\Omega\times\R^m\rightarrow\R$ exist 
such that $a_i(x,t,\xi) = \nabla_{\xi} A_i(x,t,\xi)$, 
$A_{i, t} (x,t,\xi) = \frac{\partial A_i}{\partial t} (x,t,\xi)$ 
and $G_{i}(x,\bu) = \frac{\partial G}{\partial u_i}(x,\bu)$.

We prove that, under suitable hypotheses, the functional $\J$ related to problem $(P)$ 
is $\mathcal{C}^1$ on a ``good'' Banach space $X$ and satisfies the weak Cerami--Palais--Smale condition. 
Then, generalized versions of the Mountain Pass Theorems allow us to prove 
the existence of at least one critical point and, if $\J$ is even, of infinitely many ones, too.
\end{abstract}

\noindent
{\it \footnotesize 2020 Mathematics Subject Classification}. {\scriptsize 35J50, 35J92, 47J30, 58E05}.\\
{\it \footnotesize Key words}. {\scriptsize Gradient--type quasilinear elliptic system, $p$--Laplacian type operator,
subcritical growth, weak Cerami--Palais--Smale condition, Ambrosetti--Rabinowitz condition, Mountain Pass theorem, 
even functional, pseudo--eigenvalue}.


\section{Introduction} \label{secintroduction} 

In this paper we look for weak bounded solutions of the following class of 
gradient--type quasilinear elliptic systems
\begin{equation}   \label{euler}
\left\{
\begin{array}{ll}
- {\rm div} ( a_i(x, u_i, \nabla u_i) ) 
+ A_{i, t} (x, u_i, \nabla u_i) = G_i(x, \bu) &\hbox{ in $\Omega$}\\ [5pt] 
\quad\qquad\qquad\qquad\qquad \mbox{ for }\;  i\in\{1,\dots,m\},\\[5pt]
\bu = 0 &\hbox{ on $\partial\Omega$,}
\end{array}
\right.
\end{equation}
with $m\geq 2$ and $\bu=(u_1,\dots, u_{m})$, where 
$\Omega\subset\R^N$ is an open bounded domain and
some functions $A_i:\Omega\times\R\times\R^N\rightarrow\R$, $i\in\{1,\dots,m\}$,
and $G:\Omega\times\R^m\rightarrow\R$ exist such that
\begin{equation} \label{Ata}
A_{i, t} (x, t, \xi) =\displaystyle\frac{\partial A_i}{\partial t}(x, t, \xi), 
\; a_i(x, t, \xi) = \left(\frac{\partial A_i}{\partial \xi_{ 1}}(x, t, \xi), \dots, 
\frac{\partial A_i}{\partial \xi_{N}}(x, t, \xi)\right)
\end{equation}
if $1\le i\le m$ and $\nabla_{\bu} G(x,\bu) = (G_1(x,\bu),\dots, G_m(x,\bu)$, i.e.,
\begin{equation}   \label{GuGv}
G_{i}(x, \bu) = \frac{\partial G}{\partial u_i}(x, \bu)\qquad \hbox{if}\ 1 \le i\le m.
\end{equation}
A special model of system \eqref{euler} is obtained if 
$A_{i}(x,t,\xi) = \frac{1}{p_i}\bar{A}_{i} (x,t)\vert\xi\vert^{p_i}$, $p_i > 1$, 
so \eqref{euler} reduces to problem 
\[
\left\{
\begin{array}{ll}
- {\rm div} (\bar{A}_i(x, u_i)\vert\nabla u_i\vert^{p_i -2} \nabla u_i) 
+ \frac{1}{p_i}\bar{A}_{i,t} (x, u_i)\vert\nabla u_i\vert^{p_i} = G_{i}(x, \bu) 
&\hbox{ in $\Omega$}\\ [10pt]
\quad\qquad\qquad\qquad\qquad\qquad\qquad \mbox{ for }\;  i\in\{1,\dots,m\},\\[5pt]
\bu= 0 &\hbox{ on $\partial\Omega$,}
\end{array}
\right.
\]
with $\bar{A}_{i,t} (x,t) = \frac{\partial \bar{A}_i}{\partial t}(x,t)$, 
which has been studied in \cite{CSS} if $m=2$, and generalizes 
the classical gradient-type $(p_1,\dots,p_m)$--Laplacian system 
\[
\left\{
\begin{array}{ll}
- \Delta_{p_i} u_i = G_{i}(x, \bu) &\hbox{ in $\Omega$, for $i\in\{1,\dots,m\}$,}\\ [10pt]
\bu= 0 &\hbox{ on $\partial\Omega$,}
\end{array}
\right.
\]
which has been widely analized in the past (see, e.g., \cite{dF1,dF2,PAO,VT}). 

Many variants of system \eqref{euler} have been studied 
by using several theories such as the fixed point index, 
the cohomological index, the sub--super solutions methods, the bifurcation theory 
and also some non--variational techniques 
(see, e.g., \cite{AG,dF1,CMPP,dF2,dFR,FT,PeSq,VT} and references therein).

On the contrary, here we use the variational approach introduced in \cite{CP1,CP2}, 
and already applied to systems in \cite{CSS}, so that, under suitable hypotheses, 
finding solutions of problem \eqref{euler}
turns into searching critical points of the functional 
\begin{equation}      \label{functional}
\J(\bu)\ =\ \sum_{i =1}^{m}\int_{\Omega} A_i(x, u_i, \nabla u_i) dx\ -\ \int_{\Omega} G(x, \bu) dx
\end{equation}
in a suitable Banach space $X$ obtained as product of the intersection spaces between
Sobolev spaces and $L^\infty(\Omega)$ (for more details, see Section \ref{variational}).

Differently from the problem in \cite{CSS},
here we deal with a more general system. In spite of it,
a regularity result on $\J$ in $X$ can be proved under basic assumptions on 
$G(x, \bu)$ and suitable growth conditions on the $\mathcal{C}^1$--Carath\'eodory function 
$A_i(x, t, \xi)$ and its partial derivatives, $i \in \{1,\dots,m\}$, 
but pointing out that 
it has to growth with power $p_i>1$ with respect to $\xi$
(see Proposition \ref{smooth1}).
Since, in general, $\J$ satisfies neither the Palais--Smale condition nor its 
classical Cerami variant (see \cite[Example 4.3]{CP2017}),
following the lead of ideas developed in \cite{CP2}, which
exploit the interaction between two different norms on $X$,
we inquire whether $\J$ verifies the weak Cerami--Palais--Smale condition (see Definition \ref{wCPS})
so to apply the abstract theorems in \cite{CP3}.
To this aim, by considering not the interplay just between 
$A_i(x, t, \xi)$ and the partial derivative $A_{i, t}(x, t, \xi)$ as in \cite{CSS}, 
but the interaction among them and $a_i(x, t, \xi)$ 
(see the hypotheses at the beginning of Section \ref{sec_wcps}),
 together with an Ambrosetti--Rabinowitz type condition and suitable subcritical growth 
assumptions on $G(x,\bu)$, we have that the weak Cerami--Palais--Smale condition holds
(see Proposition \ref{PropwCPS}).
Then, suitable requirements on the behavior of $G(x, \bu)$ 
in a neighborhood of the origin, respectively at infinity, 
allow us to state an existence result,
respectively a multiplicity one if $\J$ is even
by means of a ``good'' decomposition of the Sobolev spaces $W^{1,p_i}_0(\Omega)$  
as given in \cite[Section 5]{CP2}. 

Anyway, in order to not weigh this introduction down
with too many details, we prefer to specify each
hypothesis when required and to state 
our main results at the beginning of Section \ref{sec_main}
(see Theorems \ref{ThExist} and \ref{ThMolt}).
Note that, here, we look for bounded solutions of problem
\eqref{euler}, so we introduce some subcritical growth 
hypotheses on $G(x,\bu)$ which are stronger than the usual ones 
(compare \eqref{crit_expi} with \cite[Theorem 3]{dF1}).

This paper is organized as follows.
In Section \ref{abstractsection} we introduce the abstract tools and, in particular,
the weak Cerami--Palais--Smale condition
and some related existence and multiplicity results which generalize classical Mountain Pass Theorems.
In Section \ref{variational}, we introduce the variational setting and 
give the first assumptions in order to prove the variational principle required by problem \eqref{euler}. 
Then, in Section \ref{sec_wcps} we prove that functional $\J$ 
satisfies the weak Cerami--Palais--Smale condition and, finally, in Section \ref{sec_main}, 
our main results are stated and proved.


\section{Abstract tools}
\label{abstractsection}

We denote $\N = \{1, 2, \dots\}$ and, throughout this section, we assume that:
\begin{itemize}
\item $(X, \|\cdot\|_X)$ is a Banach space with dual 
$(X',\|\cdot\|_{X'})$;
\item $(W,\|\cdot\|_W)$ is a Banach space such that
$X \hookrightarrow W$ continuously, i.e. $X \subset W$ and a constant $\sigma_0 > 0$ exists
such that
\[
\|y\|_W \ \le \ \sigma_0\ \|y\|_X\qquad \hbox{for all $y \in X$;}
\]
\item $J : {\mathcal D} \subset W \to \R$ and $J \in C^1(X,\R)$ with $X \subset {\mathcal D}$.
\end{itemize}

Anyway, in order to avoid any
ambiguity and simplify, when possible, the notation, 
from now on by $X$ we denote the space equipped with
its given norm $\|\cdot\|_X$ while, if the norm $\Vert\cdot\Vert_{W}$ is involved,
we write it explicitly.

For simplicity, taking $\beta \in \R$, we say that a sequence
$(y_n)_n\subset X$ is a {\sl Cerami--Palais--Smale sequence at level $\beta$},
briefly {\sl $(CPS)_\beta$--sequence}, if
\[
\lim_{n \to +\infty}J(y_n) = \beta\quad\mbox{and}\quad 
\lim_{n \to +\infty}\|dJ\left(y_n\right)\|_{X'} (1 + \|y_n\|_X) = 0.
\]

As $(CPS)_\beta$--sequences may exist which are unbounded in $\|\cdot\|_X$
but converge with respect to $\|\cdot\|_W$ (in our setting 
with $m=1$, see \cite[Example 4.3]{CP2017}),
we have to weaken the classical Cerami--Palais--Smale 
condition in a suitable way according to the ideas already developed in 
previous papers (see, e.g., \cite{CP1,CP2,CP3}).  

\begin{definition} \label{wCPS}
The functional $J$ satisfies the
{\slshape weak Cerami--Palais--Smale 
condition at level $\beta$} ($\beta \in \R$), 
briefly {\sl $(wCPS)_\beta$ condition}, if for every $(CPS)_\beta$--sequence $(y_n)_n$,
a point $y \in X$ exists, such that 
\begin{itemize}
\item[{\sl (i)}] $\displaystyle 
\lim_{n \to+\infty} \|y_n - y\|_W = 0\quad$ (up to subsequences),
\item[{\sl (ii)}] $J(y) = \beta$, $\; dJ(y) = 0$.
\end{itemize}
If $J$ satisfies the $(wCPS)_\beta$ condition at each level $\beta \in I$, $I$ real interval, 
we say that $J$ satisfies the $(wCPS)$ condition in $I$.
\end{definition}

Since the $(wCPS)_\beta$ condition allows one to prove 
a Deformation Lemma (see \cite[Lemma 2.3]{CP3}), then
 the following generalization of the Mountain Pass Theorem can be stated
(see \cite[Theorem 1.7]{CP3} and 
compare it with the classical statement \cite[Theorem 2.2]{Ra1}).

\begin{theorem}
\label{mountainpass}
Let $J\in C^1(X,\R)$ be such that $J(0) = 0$
and the $(wCPS)$ condition holds in $\R_+$.\\
Moreover, assume that there exist some constants $R_0$, $\varrho_0 > 0$, and  
 a point $e \in X$ such that 
\begin{itemize}
\item[$(i)$] $\; y \in X, \quad \|y\|_W = R_0\qquad \then\qquad J(y) \geq \varrho_0$;
\item[$(ii)$] $\; \| e\|_W > R_0\qquad\hbox{and}\qquad J(e) < \varrho_0$.
\end{itemize}
Then, $J$ has a Mountain Pass critical point $y \in X$ such that $J(y) \geq \varrho_0$.
\end{theorem}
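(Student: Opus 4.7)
The plan is to adapt the classical minimax argument of Ambrosetti--Rabinowitz, with the crucial twist that the ``mountain'' in hypothesis $(i)$ is measured in the weaker norm $\|\cdot\|_W$, while paths live in $X$. I would first introduce the family of continuous paths
\[
\Gamma \;=\; \{\gamma \in C([0,1],X) : \gamma(0) = 0,\ \gamma(1) = e\}
\]
and define the candidate critical value
\[
c \;=\; \inf_{\gamma \in \Gamma}\ \max_{t \in [0,1]} J(\gamma(t)).
\]
Since $X \hookrightarrow W$ continuously, for every $\gamma \in \Gamma$ the real function $t \mapsto \|\gamma(t)\|_W$ is continuous with $\|\gamma(0)\|_W = 0 < R_0 < \|e\|_W = \|\gamma(1)\|_W$; by the intermediate value theorem there exists $t^\ast \in (0,1)$ with $\|\gamma(t^\ast)\|_W = R_0$, and hypothesis $(i)$ yields $J(\gamma(t^\ast)) \geq \varrho_0$. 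Taking the infimum over $\gamma$ gives $c \geq \varrho_0 > 0$, so the minimax level lives in $\R_+$, where the $(wCPS)$ condition is available.

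Next I would argue by contradiction that $c$ is a critical value. Suppose no critical point $y \in X$ satisfies $dJ(y) = 0$ and $J(y) = c$. Then the Deformation Lemma in \cite[Lemma 2.3]{CP3}, whose applicability is guaranteed precisely by $(wCPS)_c$, furnishes some $\varepsilon \in (0, \varrho_0/2)$ and a continuous deformation $\eta : [0,1]\times X \to X$ such that $\eta(0,\cdot) = \mathrm{id}_X$, $\eta(s,y) = y$ whenever $|J(y) - c| \geq 2\varepsilon$, and $\eta(1, \{J \leq c+\varepsilon\}) \subset \{J \leq c-\varepsilon\}$. Pick $\gamma \in \Gamma$ almost realizing the infimum, i.e.\ $\max_{[0,1]} J\circ\gamma \leq c+\varepsilon$, and consider the deformed path $\tilde\gamma(t) = \eta(1,\gamma(t))$. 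Because $J(0) = 0$ and $J(e) < \varrho_0 \leq c$, the choice $\varepsilon < \varrho_0/2$ forces $|J(0)-c|\geq 2\varepsilon$ and $|J(e)-c|\geq 2\varepsilon$, hence $\tilde\gamma(0) = 0$ and $\tilde\gamma(1) = e$. Therefore $\tilde\gamma \in \Gamma$ and $\max_{[0,1]} J\circ\tilde\gamma \leq c-\varepsilon$, contradicting the definition of $c$.

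Consequently, there exists $y \in X$ with $dJ(y) = 0$ and $J(y) = c \geq \varrho_0$, which is the Mountain Pass critical point we seek. The main obstacle I anticipate is ensuring that the deformation produced by $(wCPS)_c$ is genuinely continuous in the $X$-topology (so that $\tilde\gamma$ remains an admissible element of $\Gamma$), even though the compactness afforded by the $(wCPS)_c$ condition only provides convergence in the weaker norm $\|\cdot\|_W$; this subtlety is exactly what is encoded in the statement of \cite[Lemma 2.3]{CP3}, so I would rely on that lemma as a black box rather than reprove it. A secondary, but purely topological, point requiring a line of justification is the continuity of $t\mapsto \|\gamma(t)\|_W$, which follows at once from the embedding $X \hookrightarrow W$.
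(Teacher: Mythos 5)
The paper itself offers no proof of this theorem: it is imported as an abstract tool, citing \cite[Theorem 1.7]{CP3}, with the remark that the $(wCPS)$ condition is exactly what makes the Deformation Lemma \cite[Lemma 2.3]{CP3} available. Your argument is precisely that intended route (minimax over paths joining $0$ and $e$, the intermediate value theorem applied to $t\mapsto\|\gamma(t)\|_W$ via the continuous embedding $X\hookrightarrow W$ to get $c\ge\varrho_0>0$, then the deformation at the noncritical level $c$), so in substance you have reconstructed the standard proof behind the citation, and your use of the deformation lemma as a black box is legitimate since its whole point is to produce an $X$--continuous deformation out of the merely $W$--compactness encoded in $(wCPS)$.

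One quantitative step is wrong as written: from $J(e)<\varrho_0\le c$ you may only conclude $c-J(e)>0$, not $|J(e)-c|\ge 2\varepsilon$ for every $\varepsilon<\varrho_0/2$. Indeed, if $c=\varrho_0$ and $J(e)=\varrho_0-\delta$ with $\delta$ small, then $|J(e)-c|=\delta$ can be smaller than $2\varepsilon$, the deformation need not fix $e$, and $\tilde\gamma$ may fail to belong to $\Gamma$ (the endpoint $0$ causes no problem, since $|J(0)-c|=c\ge\varrho_0>2\varepsilon$). The repair is one line: choose $\varepsilon$ with $0<2\varepsilon<\min\{\varrho_0,\,c-J(e)\}$, equivalently $2\varepsilon<c-\max\{J(0),J(e)\}$, which is possible because $\max\{J(0),J(e)\}<\varrho_0\le c$; with this choice both endpoints lie outside the strip where the deformation acts, and the contradiction argument closes as you describe.
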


Furthermore, with the stronger assumption that $J$ is even, also 
the symmetric Mountain Pass Theorem can be generalized as follows
(see \cite[Theorem 2.4]{CPS_CCM} or also \cite[Theorem 1.8]{CP3} 
and compare it with \cite[Theorem 9.12]{Ra1} and 
\cite[Theorem 2.4]{BBF}).

\begin{theorem}
\label{abstract}
Let $J\in C^1(X,\R)$ be an even functional such that $J(0) = 0$ and
the $(wCPS)$ condition holds in $\R_+$.
Moreover, assume that $\varrho > 0$ exists so that:
\begin{itemize}
\item[$({\mathcal H}_{\varrho})$]
three closed subsets $V_\varrho$, $Y_\varrho$ and $\M_\varrho$ of $X$ and a constant
$R_\varrho > 0$ exist which satisfy the following conditions:
\begin{itemize}
\item[$(i)$] $V_\varrho$ and $Y_\varrho$ are subspaces of $X$ such that
\[
V_\varrho + Y_\varrho = X,\qquad \codim Y_\varrho\ <\ \dim V_\varrho\ <\ +\infty;
\]
\item[$(ii)$] $\M_\varrho = \partial \enne$, where $\enne \subset X$ is a neighborhood of the origin
which is symmetric and bounded with respect to $\|\cdot\|_W$; 
\item[$(iii)$]  $\ y \in \M_\varrho \cap Y_\varrho\qquad \then\qquad J(y) \ge \varrho$;
\item[$(iv)$] $\ y \in V_\varrho, \quad \|y\|_X \ge R_\varrho \qquad \then\qquad J(y) \le 0$.
\end{itemize}
\end{itemize}
Then, if we put 
\[
\beta_\varrho\ =\ \inf_{\gamma \in \Gamma_\varrho} \sup_{y\in V_\varrho} J(\gamma(y)),
\]
with 
\[
\begin{split}
\Gamma_\varrho = \{\gamma : X \to X: \; &\gamma\ \hbox{odd homeomeorphism such that}\\
&\gamma(y) = y \ \hbox{if $y \in V_\varrho$ with $\|y\|_X \ge R_\varrho$}\},
\end{split}
\]
functional $J$ possesses at least a pair of symmetric critical points in $X$ 
with corresponding critical level $\beta_\varrho$ which belongs to $[\varrho,\varrho_1]$,
where $\varrho_1 \ge \displaystyle \sup_{y \in V_\varrho}J(y) > \varrho$.
\end{theorem}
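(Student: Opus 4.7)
The plan is to proceed through the three usual steps of a minimax argument: verify that $\beta_\varrho$ is well--defined and finite, establish the lower bound $\beta_\varrho\ge\varrho$ by a topological intersection argument, and then invoke the Deformation Lemma granted by the $(wCPS)_{\beta_\varrho}$ condition to exhibit a critical point at level $\beta_\varrho$.

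First I would observe that $\Gamma_\varrho$ contains the identity, so $\beta_\varrho\le\sup_{y\in V_\varrho}J(y)$. Hypothesis $(iv)$ together with the continuity of $J$ and the finite dimension of $V_\varrho$ force this supremum to be attained on the compact set $\{y\in V_\varrho:\|y\|_X\le R_\varrho\}$; call its value $\varrho_1$. Once the bound $\beta_\varrho\ge\varrho$ is proved, the strict inequality $\varrho_1>\varrho$ follows.

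The core of the proof is the intersection property. Fix $\gamma\in\Gamma_\varrho$ and set
\[
A\ =\ \{y\in V_\varrho\ :\ \gamma(y)\in\overline{\enne}\}.
\]
Oddness of $\gamma$ gives $\gamma(0)=0\in\enne$, so $A$ is a symmetric neighborhood of the origin in $V_\varrho$, and continuity of $\gamma$ makes it closed. Because $V_\varrho$ is finite dimensional, the norms $\|\cdot\|_X$ and $\|\cdot\|_W$ are equivalent on it, hence boundedness of $\enne$ in $\|\cdot\|_W$ together with the identity $\gamma(y)=y$ for $\|y\|_X\ge R_\varrho$ yield boundedness of $A$. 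By approximating any $y\in\partial_{V_\varrho}A$ with points of $V_\varrho\setminus A$, continuity of $\gamma$ forces $\gamma(y)\in\partial\enne=\M_\varrho$, so $\partial_{V_\varrho}A$ is a compact symmetric set mapped by $\gamma$ into $\M_\varrho$, and classical genus theory gives that its Krasnoselskii genus equals $\dim V_\varrho$. Let $P:X\to X/Y_\varrho$ denote the quotient projection onto a space of dimension $\codim Y_\varrho$. If $P\circ\gamma$ never vanished on $\partial_{V_\varrho}A$, it would define an odd continuous map into $(X/Y_\varrho)\setminus\{0\}$, forcing the genus of $\partial_{V_\varrho}A$ to be at most $\codim Y_\varrho$, in contradiction with $\codim Y_\varrho<\dim V_\varrho$. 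Thus some $y\in\partial_{V_\varrho}A$ satisfies $\gamma(y)\in Y_\varrho\cap\M_\varrho$, and hypothesis $(iii)$ gives $J(\gamma(y))\ge\varrho$. Taking the supremum over $V_\varrho$ and then the infimum over $\Gamma_\varrho$ yields $\beta_\varrho\ge\varrho$.

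To conclude, I would argue by contradiction: suppose no critical point at level $\beta_\varrho$ exists. Since $\beta_\varrho\ge\varrho>0$, the $(wCPS)_{\beta_\varrho}$ condition allows the Deformation Lemma \cite[Lemma 2.3]{CP3} to produce, for small $\eps>0$, an odd homeomorphism $\eta:X\to X$ with $\eta(\{J\le\beta_\varrho+\eps\})\subset\{J\le\beta_\varrho-\eps\}$ that is the identity on $\{J\le\varrho/2\}$. Pick $\gamma_0\in\Gamma_\varrho$ with $\sup_{V_\varrho}J\circ\gamma_0<\beta_\varrho+\eps$: for every $y\in V_\varrho$ with $\|y\|_X\ge R_\varrho$, hypothesis $(iv)$ yields $\gamma_0(y)=y$ and $J(y)\le 0<\varrho/2$, so $\eta\circ\gamma_0$ still belongs to $\Gamma_\varrho$. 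However $\sup_{V_\varrho}J\circ\eta\circ\gamma_0\le\beta_\varrho-\eps$, contradicting the definition of $\beta_\varrho$. The main obstacle is the equivariance and boundary behavior of this last step: one must be sure that the map produced by the $(wCPS)$ mechanism of \cite{CP3} can be chosen odd and trivial on low sublevels, so that the composition remains in $\Gamma_\varrho$.
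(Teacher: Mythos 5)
The paper does not prove Theorem \ref{abstract} itself but imports it from \cite[Theorem 2.4]{CPS_CCM} (see also \cite[Theorem 1.8]{CP3}), and your argument follows essentially the same standard route used there: the Krasnoselskii-genus intersection property (via $A=\gamma^{-1}(\overline{\enne})\cap V_\varrho$ and the quotient map onto $X/Y_\varrho$) giving $\beta_\varrho\ge\varrho$, combined with the odd deformation supplied by the $(wCPS)$ Deformation Lemma \cite[Lemma 2.3]{CP3}, whose equivariance for even $J$ and triviality outside the strip $J^{-1}(]\beta_\varrho-\bar\eps,\beta_\varrho+\bar\eps[)$ is exactly what that lemma guarantees, so the obstacle you flag is settled by the cited result. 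Your proof is correct in substance; just note that monotonicity of the genus (genus of $\partial_{V_\varrho}A$ at least $\dim V_\varrho$) is all you need, and that since the critical point $u$ found satisfies $J(u)=\beta_\varrho\ge\varrho>0=J(0)$ it is nonzero, so evenness of $J$ yields the symmetric pair $\pm u$ claimed in the statement.
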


If we can apply infinitely many times Theorem \ref{abstract}, then the following multiplicity abstract 
result can be stated.
\begin{corollary}
\label{multiple}
Let $J \in C^1(X,\R)$ be an even functional such that $J(0) = 0$,
the $(wCPS)$ condition holds in $\R_+$ and a sequence $(\varrho_n)_n \subset\ ]0,+\infty[$ exists
such that $\varrho_n \nearrow +\infty$ and  
assumption $({\mathcal H}_{\varrho_n})$ holds for all $n \in \N$.\\
Then, functional $J$ possesses a sequence of critical points $(u_{k_n})_n \subset X$ such that
$J(u_{k_n}) \nearrow +\infty$ as $n \nearrow +\infty$.
\end{corollary}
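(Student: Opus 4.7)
The plan is to apply Theorem \ref{abstract} once for each index $n \in \N$. All the global assumptions on $J$ required there — namely $J \in C^1(X,\R)$, $J(0) = 0$, $J$ even, and the $(wCPS)$ condition in $\R_+$ — are provided by the hypotheses of the corollary, and assumption $(\mathcal{H}_{\varrho_n})$ is in force for every $n$. Hence, for each $n \in \N$, Theorem \ref{abstract} produces a pair of symmetric critical points $\pm u_{k_n} \in X$ with
\[
J(u_{k_n}) \;=\; \beta_{\varrho_n} \;\in\; [\varrho_n,\varrho_{1,n}],
\]
where the lower bound is the key piece of information carried over from $(\mathcal{H}_{\varrho_n})$.

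From $J(u_{k_n}) \geq \varrho_n$ and $\varrho_n \nearrow +\infty$ it follows at once that $J(u_{k_n}) \to +\infty$ as $n \to +\infty$. After discarding finitely many terms and passing to a subsequence (which I relabel in the same way), I may assume that $J(u_{k_n})$ is strictly increasing, hence $J(u_{k_n}) \nearrow +\infty$. Since a single point in $X$ carries a single value of $J$, strict monotonicity of the critical levels automatically forces the critical points themselves to be pairwise distinct, producing the required sequence.

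There is essentially no genuine obstacle: the corollary is a bookkeeping consequence of Theorem \ref{abstract}. The one point worth noting is that the upper bound $\varrho_{1,n}$ appearing in Theorem \ref{abstract} depends on the finite--dimensional subspace $V_{\varrho_n}$ and is in general not controlled uniformly in $n$; however, this plays no role here, because only the lower bound $J(u_{k_n}) \geq \varrho_n$ is needed to drive the critical levels to $+\infty$ and to distinguish infinitely many critical points.
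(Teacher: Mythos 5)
Your proposal is correct and is essentially the argument the paper intends: the corollary is stated as an immediate consequence of applying Theorem \ref{abstract} once for each $\varrho_n$, obtaining critical points with $J(u_{k_n})=\beta_{\varrho_n}\ge\varrho_n\to+\infty$, exactly as you do. Your extra remark on extracting a strictly increasing subsequence of levels (which also guarantees the critical points are pairwise distinct) and on the irrelevance of the upper bounds $\varrho_{1,n}$ is a correct and complete way to finish the bookkeeping.
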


\section{Variational setting and first properties}
\label{variational}

From now on, let $\Omega \subset \R^N$ be an open bounded domain, $N\ge 2$,
and $m \in \N$ such that $m\ge 2$,
so we denote by:
\begin{itemize}
\item $\bu =(u_1,\dots, u_m)$, $\bu_n = (u^n_1,\dots, u^n_m)$, ${\bf 0}=(0,\dots, 0)\in\R^m$;
\item $\{{\bf e}_j : 1\leq j\leq m\}$ the standard basis of the Euclidean space $\R^m$, 
i.e., ${\bf e}_j$ has components $e_i^j = \delta_i^j$;
\item $L^r(\Omega)=L^r(\Omega,\R)$, $1 \le r < +\infty$, the classical Lebes\-gue space with
norm $|u|_r = \left(\int_\Omega|u|^r dx\right)^{1/r}$;
\item $L^\infty(\Omega)=L^\infty(\Omega,\R)$ the space of Lebesgue--measurable 
essentially\\ bounded functions with norm $\displaystyle |u|_{\infty} = \esssup_{\Omega} |u|$;
\item $W^{1,p}_0(\Omega)=W^{1,p}_0(\Omega,\R)$ the classical Sobolev space equipped with
norm $\|u\|_{W_0^{1, p}} = |\nabla u|_{p}$ if $1 \le p < +\infty$;
\item $\meas(D)$ the usual Lebesgue measure of a measurable set $D$ in $\R^N$.
\end{itemize}

For simplicity, here and in the following we denote by $|\cdot|$  
the standard norm on any Euclidean space, as the dimension
of the considered vector is clear and no ambiguity occurs. 
Moreover, for short, we replace
\[
\sum_{i=1}^m\quad \hbox{with}\quad \sum_i
\qquad \hbox{and}\qquad 
\sum_{\substack{j =1\\ j\neq i}}^{m} \quad \hbox{with}\quad \sum_{j\ne i}.
\]

\begin{definition}
A function $f:\Omega\times\R^l\to\R$, $l \in \N$, is a $\mathcal{C}^{k}$--Carath\'eodory function, 
$k\in\N\cup\lbrace 0\rbrace$, if
\begin{itemize}
\item $f(\cdot, \omega) : x \in \Omega \mapsto f(x, \omega) \in \R$ is measurable for all $\omega \in \R^l$,
\item $f(x,\cdot) : \omega \in \R^l \mapsto f(x, \omega) \in \R$ is $\mathcal{C}^k$ for a.e. $x \in \Omega$.
\end{itemize}
\end{definition}

For each $i\in\{1,\dots, m\}$, let $A_i:(x,t, \xi) \in \Omega\times\R\times\R^N \mapsto A_i(x,t, \xi) \in \R$
be a given function such that the following conditions hold:
\begin{itemize}
\item[$(h_0)$]
$A_i(x,t,\xi)$ is a $\mathcal{C}^1$--Carath\'eodory function with partial derivatives
\[
A_{i,t}(x,t, \xi)\quad\mbox{ and }\quad a_i(x,t, \xi)
\]
as in \eqref{Ata};
\item[$(h_1)$] a power $p_i > 1$ and some positive continuous functions
 $\Phi_0^i$, $\phi_0^i$, $\Phi_1^i$, $\phi_1^i$, $\Phi_2^i$, $\phi_2^i:\R\to\R$ exist such that 
\begin{eqnarray}
|A_{i}(x, t, \xi)|&\leq&\Phi_0^i(t) +\phi_0^i(t)|\xi|^{p_i}, \label{conto}\\
| A_{i,t}(x, t, \xi)| &\leq&\Phi_1^i(t) +\phi_1^i(t)|\xi|^{p_i}, \label{conto1}\\
| a_i(x, t, \xi)| &\leq&\Phi_2^i(t) +\phi_2^i(t)|\xi|^{p_i-1}, \label{conto2}
\end{eqnarray}
for a.e. $x\in\Omega$ and for all $(t, \xi)\in\R\times\R^N$.
\end{itemize}

So, taking $p_i > 1$ as in $(h_1)$, we consider the related Sobolev space
\[
W_i = W_0^{1, p_i}(\Omega)\quad \hbox{with norm $\Vert \cdot\Vert_{W_i} = \Vert \cdot\Vert_{W_0^{1, p_i}}$.}
\]
From the Sobolev Embedding Theorem, 
$W_i$ is continuously embedded in $L^{r}(\Omega)$ for any $r\in[1, p_i^{\ast}]$ 
with $p_i^{\ast} = \frac{Np_i}{N-p_i}$ if $N>p_i$, 
or $r \in [1,+\infty[$ with $p^*_i = +\infty$ if $p_i \ge N$, i.e.,
for such an $r$ a positive constant 
$\tau_{i,r}$ exists such that
\begin{equation}   \label{Sobpi}
\vert u\vert_r \leq\tau_{i,r}\Vert u\Vert_{W_i} \quad \mbox{ for all } u\in W_i.
\end{equation}
For simplicity, we put
\begin{equation}     \label{0infinito}
\frac{1}{p^*_i} = 0 \quad \hbox{if} \quad p^*_i = +\infty.
\end{equation}

Now, assume that a function $G:(x,\bu) \in \Omega\times\R^m \mapsto G(x,\bu) \in \R$
exists such that
\begin{itemize}
\item[$(g_0)$]
$G(x,\bu)$ is a $\mathcal{C}^1$--Caratheodory function with partial derivatives
$G_i(x,\bu)$ as in \eqref{GuGv} such that
\[
G(\cdot,\textbf{0}) \in L^\infty(\Omega)
\]
and
\[
G_{i}(x, {\bf 0}) = 0 \quad \mbox{ for a.e. } x\in\Omega, \; \hbox{for each}\ i\in\{1,\dots, m\};
\]
\item[$(g_1)$] for every $i$, $j \in \{1,\dots,m\}$, $j\ne i$, some real numbers $q_i \geq 1$,
$s_{i, j} \geq 0$ and a constant $\sigma >0$ exist such that 
\begin{equation}      \label{gucr}
\vert G_{i}(x, \bu)\vert\leq \sigma\Bigg(1 +\vert u_i\vert^{q_i -1} +\sum_{j\neq i} \vert u_j\vert^{s_{i, j}}\Bigg)
\end{equation}
for a.e. $x\in\Omega$ and all $\bu \in \R^{m}$, with 
\begin{equation}    \label{crit_exp}
1\leq q_i < p_i^{\ast} 
\end{equation}
and  
\begin{equation}     \label{crit_expi}
0 \leq s_{i,j} < \frac{p_i}{N}\left(1-\frac{1}{p_i^{\ast}}\right) p_j^{\ast}.
\end{equation}
\end{itemize}

\begin{remark}
For a.e. $x\in\Omega$ and all $\bu\in\R^{m}$,
condition $(g_0)$ together with Mean Value Theorem implies that
$t \in ]0,1[$ exists such that
\[
|G(x,\bu)| \le \ |G(\cdot,\textbf{0})|_\infty + \sum_i |G_i(x,t\bu)| |u_i|,
\]
then from \eqref{gucr} it follows that
\begin{equation}    \label{Gsigma}
\vert G(x,\bu)\vert\ \leq\ \sigma_1 \sum_i \left(1 + \vert u_i\vert^{q_i} 
+\sum_{j\neq i} \vert u_i\vert \vert u_j\vert^{s_{i, j}}\right) 
\end{equation}
for a.e. $x\in\Omega$, all $\bu\in\R^m$, with $\sigma_1 > 0$ which depends on $\sigma$, $m$ and $|G(\cdot,\textbf{0})|_\infty$.
\end{remark}

In order to investigate the existence of weak solutions  
of the nonlinear problem \eqref{euler} as critical points of $\J$ defined as in \eqref{functional},
we have to introduce the ``right'' Banach space. To this aim,
the notation introduced for the abstract
setting in Section \ref{abstractsection} is referred to  
\begin{equation}     \label{Wdefn1}
W =W_1\times\dots\times W_m 
\end{equation}
with norm
\begin{equation}   \label{Wnorm}
\Vert\bu\Vert_W = \Vert(u_1,\dots,u_m)\Vert_W =\sum_i \Vert u_i\Vert_{W_i}
\quad \hbox{if $\bu\in W$,}
\end{equation}
while the Banach space $(X,\|\cdot\|_X)$ is defined as 
\begin{equation}     \label{Xdefn1}
X =X_1\times\dots\times X_m
\end{equation}
with norm
\[
\Vert\bu\Vert_X = \Vert(u_1,\dots,u_m)\Vert_X =\sum_i \Vert u_i\Vert_{X_i}
\quad \hbox{if $\bu\in X$,}
\]
where, for any $i\in\{1,\dots,m\}$, it is
\[
X_i:= W_i\cap L^{\infty}(\Omega)
\]
equipped with norm
\begin{equation}   \label{Xinorm}
\Vert u\Vert_{X_i} = \Vert u\Vert_{W_i} + \vert u\vert_{\infty} \quad\mbox{ if } u\in X_i.
\end{equation}
We note that, setting 
\[
L = L^{\infty}(\Omega)\times\dots \times L^{\infty}(\Omega)
\]
with norm
\[
\Vert \bu\Vert_{L}= \Vert(u_1,\dots,u_m)\Vert_L = \sum_{i} \vert u_i\vert_{\infty}
\quad \hbox{if $\bu\in L$,}
\]
we have that $X$ in \eqref{Xdefn1} can also be written as 
\[
X = W\cap L \qquad \hbox{with }\quad 
\Vert \bu\Vert_X = \Vert \bu\Vert_W + \Vert \bu\Vert_L. 
\]

For every $i\in\{1, \dots, m\}$, 
we have that $(W_i, \Vert\cdot\Vert_{W_i})$ is a reflexive Banach space
and, by definition, it is $X_i \hookrightarrow W_i$ and $X_i \hookrightarrow L^\infty(\Omega)$
with continuous embeddings.
Thus, also $(W, \Vert\cdot\Vert_W)$ is a reflexive Banach space
and, obvioulsy, $X \hookrightarrow W$ and $X \hookrightarrow L$ with continuous embeddings, too.

\begin{remark}
If $i\in\{1, \dots, m\}$ is such that $p_i > N$, then $X_i = W_i$, as $W_i \hookrightarrow L^\infty(\Omega)$.
So, in general, if an $i\in\{1, \dots, m\}$ exists such that $p_i \le N$ then $X \ne W$, 
but if for all $i\in\{1, \dots, m\}$ it is $p_i > N$, 
then $X = W$ and the classical Mountain Pass Theorems in \cite{AR} can be used, if required.
\end{remark}

If conditions $(h_0)$--$(h_1)$, $(g_0)$ and \eqref{gucr} hold,
by \eqref{Gsigma} and direct computations 
it follows that $\J(\bu)$ in \eqref{functional}
is well defined for all $\bu\in X$. Moreover,
taking any $\bu, {\bf v}\in X$, the G\^ateaux differential 
of functional $\J$ in $\bu$ along the direction ${\bf v}$ is well defined as
\begin{equation}     \label{diff}
\begin{split}
d\J(\bu)[\bv] =\ &\sum_{i}\left(\int_{\Omega} a_i(x, u_i, \nabla u_i)\cdot\nabla v_i \ dx\right.\\ 
&\qquad\left.+\int_{\Omega} A_{i, t}(x, u_i, \nabla u_i) v_i \ dx - \int_{\Omega} G_{i}(x, \bu) v_i \ dx\right).
\end{split}
\end{equation}
 
We note that, since $\bu, {\bf v}\in X$ imply that $\bu, {\bf v}\in L$, no 
critical growth upper bound on the powers $q_i$ and $s_{i,j}$ is required 
in order to have $d\J(\bu)[\bv] \in \R$.

For smplicity, for every $i\in\{1,\dots,m\}$ we introduce the $i$--th partial derivative of $\J$ in $\bu \in X$ as
\[
\frac{\partial\J}{\partial u_i}(\bu): v\in X_i\mapsto \frac{\partial\J}{\partial u_i}(\bu)[v] = d\J(\bu)[v {\bf e}_i]\in\R,
\]
where from \eqref{diff} it follows that
\begin{equation}     \label{dJw}
\begin{split}
\frac{\partial\J}{\partial u_i}(\bu) [v] \ &= \int_{\Omega} a_i(x, u_i, \nabla u_i)\cdot\nabla v\ dx 
+ \int_{\Omega} A_{i,t}(x, u_i, \nabla u_i) v\ dx\\
&\quad - \int_{\Omega} G_{i} (x, \bu) v\ dx.
\end{split}
\end{equation}

\begin{remark}      \label{RemNot}
Taking $\bu\in X$, since $d\J(\bu)\in X^{\prime}$, 
then 
\[
\frac{\partial\J}{\partial u_i}(\bu)\in X_i^{\prime}
\qquad \hbox{for all $i \in \{1,\dots,m\}$}
\]
and
\begin{equation}    \label{dJz1}
d\J(\bu)[{\bf v}] = \sum_{i}\frac{\partial\J}{\partial u_i}(\bu)[v_i]
 \qquad \hbox{for all} \ {\bf v} = (v_1,\dots,v_m) \in X.
\end{equation}
Moreover, direct computations imply that
\begin{equation}    \label{dJzstar}
\left\|\frac{\partial\J}{\partial u_i}(\bu)\right\|_{X_i'} \le \|d\J(\bu)\|_{X'}
\qquad \hbox{for all $i \in \{1,\dots,m\}$}
\end{equation}
and 
\begin{equation} \label{dJzstar2}
\|d\J(\bu)\|_{X'} \le \sum_{i}\left\|\frac{\partial\J}{\partial u_i}(\bu)\right\|_{X_i'}.
\end{equation}
Clearly, we have that
\[
d\J(\bu) = 0 \;\hbox{in $X$}\quad\iff\quad
\frac{\partial\J}{\partial u_i}(\bu)= 0 \;\hbox{in $X_i\quad$ for all } i\in\{1,\dots,m\}.
\]
\end{remark}

Now, we can state a regularity result.

\begin{proposition}\label{smooth1}
Suppose that conditions $(h_0)$--$(h_1)$, $(g_0)$ and \eqref{gucr} hold. 
Let $(\bu_n)_n \subset X$ and $\bu \in X$ be such that
\begin{equation}     \label{uconv}
\bu_n\to \bu\; \mbox{strongly in } W, \qquad \bu_n\to \bu\; \mbox{ a.e. in } \Omega
\quad \mbox{ if } n\to +\infty.
\end{equation}
If $M > 0$ exists such that
\begin{equation}       \label{ununiflim}
\| \bu_n\|_L\leq M \quad \hbox{for all $n\in\N$,}
\end{equation}
then
\[
\J(\bu_n) \to \J(\bu) \quad \mbox{ and } \quad 
\Vert d\J({\bf{u}_n}) - d\J(\bu)\Vert_{X^{\prime}}\to 0 \ \mbox{ as } n\to +\infty.
\]
Hence, $\J$ is a $C^1$ functional on $X$ with Fr\'echet differential  
defined as in (\ref{diff}).
\end{proposition}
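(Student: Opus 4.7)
The plan is to treat each of the integrands appearing in $\J$ and in $d\J$ separately and invoke a generalized Lebesgue dominated convergence theorem in the appropriate $L^r$ space. Three ingredients will be used repeatedly: $(i)$ pointwise convergence of all integrands, obtained from the hypothesis $\bu_n\to\bu$ a.e., the Carath\'eodory regularity of $A_i$, $A_{i,t}$, $a_i$, $G$, $G_i$, and (up to a subsequence extracted from the strong $W$-convergence) the a.e.\ convergence $\nabla u_i^n\to\nabla u_i$; $(ii)$ a uniform bound $\Phi_k^i(u_i^n)+\phi_k^i(u_i^n)\le C=C(M)$ a.e., obtained from \eqref{ununiflim} and the continuity of the coefficient functions, together with the companion uniform $L^\infty$-bound $|G(x,\bu_n)|+|G_i(x,\bu_n)|\le C$ coming from \eqref{gucr} and \eqref{Gsigma}; and $(iii)$ the strong $L^1$-convergence $|\nabla u_i^n|^{p_i}\to|\nabla u_i|^{p_i}$, extracted from the strong $L^{p_i}$-convergence of $\nabla u_i^n$ via a Mean Value plus H\"older estimate.

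For the scalar convergence $\J(\bu_n)\to\J(\bu)$, \eqref{conto} gives $|A_i(x,u_i^n,\nabla u_i^n)|\le C(1+|\nabla u_i^n|^{p_i})$, whose right side converges in $L^1$, so the generalized DCT yields $L^1$-convergence of $A_i(\cdot,u_i^n,\nabla u_i^n)$; convergence of the $G$-term is immediate via classical DCT using the uniform bound in $(ii)$. For the derivative it suffices, via Remark~\ref{RemNot}, to show $\|\partial\J/\partial u_i(\bu_n)-\partial\J/\partial u_i(\bu)\|_{X_i'}\to 0$ for each $i$. For a test $v\in X_i$ with $\|v\|_{X_i}\le 1$, H\"older's inequality gives
\[
\left|\tfrac{\partial\J}{\partial u_i}(\bu_n)[v]-\tfrac{\partial\J}{\partial u_i}(\bu)[v]\right|\le\alpha_n^i\|v\|_{W_i}+(\beta_n^i+\gamma_n^i)|v|_\infty,
\]
with $\alpha_n^i=|a_i(\cdot,u_i^n,\nabla u_i^n)-a_i(\cdot,u_i,\nabla u_i)|_{p_i'}$, $\beta_n^i=|A_{i,t}(\cdot,u_i^n,\nabla u_i^n)-A_{i,t}(\cdot,u_i,\nabla u_i)|_1$ and $\gamma_n^i=|G_i(\cdot,\bu_n)-G_i(\cdot,\bu)|_1$. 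Each of these vanishes by the generalized DCT: for $\alpha_n^i$, the identity $(p_i-1)p_i'=p_i$ together with \eqref{conto2} produces the $L^1$-converging dominator $C(1+|\nabla u_i^n|^{p_i})$ for $|a_i|^{p_i'}$; for $\beta_n^i$ the same dominator works by \eqref{conto1}; for $\gamma_n^i$ the uniform $L^\infty$-bound plus $\meas(\Omega)<+\infty$ and classical DCT suffice.

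The $C^1$ conclusion then comes for free: \eqref{diff}--\eqref{dJw} already display the candidate differential, and applying the preceding argument to an arbitrary sequence $\bu_n\to\bu$ in $X$ (which automatically enjoys strong $W$-convergence, a.e.\ convergence up to subsequence, and uniform $L^\infty$-boundedness) shows that $d\J:X\to X'$ is norm-continuous, hence a Fr\'echet differential. I expect the genuinely delicate step to be the $L^{p_i'}$-convergence of $a_i(\cdot,u_i^n,\nabla u_i^n)$: both the tight exponent matching $(p_i-1)p_i'=p_i$ and the fact that the a.e.\ convergence of $\nabla u_i^n$ is only available up to subsequence---which must be dealt with by the standard argument that every subsequence admits a further convergent sub-subsequence with the same limit---are critical there.
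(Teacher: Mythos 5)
Your proposal is correct and follows essentially the same route as the paper: both reduce the differential estimate to the partial derivatives $\frac{\partial\J}{\partial u_i}$ via Remark \ref{RemNot}, handle the $G$- and $G_i$-terms by the classical Dominated Convergence Theorem using the uniform $L^\infty$ bound \eqref{ununiflim}, and treat the $A_i$, $A_{i,t}$, $a_i$ terms by dominated convergence with the $L^1$-converging majorant $C(1+|\nabla u_i^n|^{p_i})$ coming from $(h_1)$ -- the paper simply delegates this last part to the proof of \cite[Proposition 3.1]{CP2}, whose details (exponent matching $(p_i-1)p_i'=p_i$, subsequence argument for a.e.\ convergence of gradients) you have written out explicitly.
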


\begin{proof}
Let $(\bu_n)_n\subset X$ and $\bu\in X$ be such that \eqref{uconv} and \eqref{ununiflim} hold. 
Taking any $i\in\{1,\dots, m\}$, from hypotheses $(h_0)$--$(h_1)$, \eqref{uconv} and \eqref{ununiflim},
by reasoning as in the proof of \cite[Proposition 3.1]{CP2}, it follows that the
``partial'' functional
\[
\mathcal{A}_i: u\in X_i\mapsto\int_{\Omega} A_i(x, u, \nabla u) dx \in \R
\]
is such that
\[
\mathcal{A}_i(u^n_{i})\rightarrow \mathcal{A}_i(u_i)\quad\mbox{ as } n\to +\infty.
\]
Moreover, by means of Dominated Convergence Theorem, conditions $(g_0)$, \eqref{Gsigma}, \eqref{uconv} and 
\eqref{ununiflim} imply that
\[
\int_{\Omega}G(x, \bu_n) dx \longrightarrow \int_{\Omega}G(x, \bu) dx.
\]
Thus, summing up, we have that
\[
\J(\bu_n)\rightarrow\J(\bu) \quad\mbox{ as } n\to +\infty.
\]
Now, we observe that \eqref{dJz1} gives
\[
\Vert d\J(\bu_n) -d\J(\bu)\Vert_{X^{\prime}}\leq\sum_{i} \left\Vert\frac{\partial\J}{\partial u_i}(\bu_n) 
-\frac{\partial\J}{\partial u_i}(\bu)\right\Vert_{X_i^{\prime}},
\]
so, in order to prove that $\Vert d\J({\bf{u}_n}) - d\J(\bu)\Vert_{X^{\prime}}\to 0$, it is enough to verify that
\begin{equation}   \label{forany}
\left\Vert\frac{\partial\J}{\partial u_i}(\bu_n) -\frac{\partial\J}{\partial u_i}(\bu)\right\Vert_{X_i^{\prime}}\rightarrow 0 
\quad\mbox{ for all } i\in\{1,\dots, m\}.
\end{equation}
To this aim, fixing any $i\in\{1,\dots,m\}$ and taking $v\in X_i$ such that $\Vert v\Vert_{X_i}\leq 1$, 
from \eqref{dJw} we have that
\[
\begin{split}
&\left\vert\frac{\partial\J}{\partial u_i}(\bu_n)[v] - \frac{\partial\J}{\partial u_i}(\bu)[v]\right\vert
\leq\int_{\Omega} \vert a_i(x, u^n_i, \nabla u^n_i) - a_i(x, u_i, \nabla u_i)\vert \vert\nabla v\vert dx\\
&\quad +\int_{\Omega}\vert A_{i, t}(x, u^n_i, \nabla u^n_i) - A_{i, t}(x, u_i, \nabla u_i)\vert dx +
\int_{\Omega}\vert G_{i}(x, \bu_n) - G_{i}(x, \bu)\vert dx,
\end{split}
\]
where, by reasoning as in \cite[Proposition 3.1]{CP2}, it can be proved that
\[
\int_{\Omega}\vert a_i(x, u^n_i, \nabla u^n_i) - a_i(x, u_i, \nabla u_i)\vert \vert\nabla v\vert dx\rightarrow 0
\quad\mbox{ uniformly with respect to } v 
\]
and 
\[
\int_{\Omega}\vert A_{i,t}(x, u^n_i, \nabla u^n_i) - A_{i,t}(x, u_i, \nabla u_i)\vert dx\to 0.
\]
Moreover, \eqref{uconv}, \eqref{ununiflim}, hypotheses $(g_0)$, \eqref{gucr}  
and, again, Dominated Convergence Theorem, imply that
\[
\int_{\Omega}\vert G_{i}(x, \bu_n) - G_{i}(x, \bu)\vert dx \rightarrow 0.
\]
Hence, summing up, we conclude that
\[
\left\vert\frac{\partial\J}{\partial u_i}(\bu_n)[v] -\frac{\partial\J}{\partial u_i}(\bu)[v]\right\vert\rightarrow 0 
\quad\mbox{ uniformly with respect to } v \in X_i, \; \|v\|_{X_i} \le 1,
\]
and, by the arbitrariness of $i\in\{1,\dots,m\}$, it follows that \eqref{forany} is satisfied, too.
\end{proof}


\section{The weak Cerami--Palais--Smale condition}
\label{sec_wcps}

In order to prove some more properties of functional $\J:X \to \R$ defined as
in \eqref{functional}, 
we require that not only $(h_0)$--$(h_1)$ hold but also
$R \geq 1$ exists such that for each $i\in\{1, \dots, m\}$
function $A_i(x,t,\xi)$ and its partial derivatives in \eqref{Ata} satisfy the following conditions:
\begin{itemize}
\item[$(h_2)$] a constant $\lambda >0$ exists such that
\[
a_i(x, t, \xi)\cdot\xi\geq\lambda \vert\xi\vert^{p_i} \quad \mbox{a.e. in } \Omega, \mbox{ for all } (t, \xi)\in\R\times\R^N
\]
with $p_i > 1$ as in $(h_1)$;
\item[$(h_3)$] some constants $\eta_1$, $\eta_2 > 0$ exist  such that
\begin{equation}\label{exh_2}
A_i(x, t, \xi)\leq \eta_1 a_i(x, t, \xi)\cdot\xi \quad \mbox{ a.e. in } \Omega \mbox{ if } \vert(t, \xi)\vert\geq R,
\end{equation}
\begin{equation}\label{exh_3}
\sup_{\vert (t, \xi)\vert\leq R} \vert A_i(x, t, \xi)\vert\leq\eta_2 \quad \mbox{ a.e. in } \Omega;
\end{equation}
\item[$(h_4)$] a constant $\mu_1>0$ exists such that
\[
a_i(x, t, \xi)\cdot\xi + A_{i, t}(x, t, \xi) t \geq \mu_1 a_i(x, t, \xi)\cdot\xi 
\quad \mbox{ a.e. in } \Omega \mbox{ if } \vert(t, \xi)\vert\geq R;
\]
\item[$(h_5)$] taking $p_i > 1$ as in $(h_1)$, some positive constants $\theta_i, \mu_2>0$ exist such that
\[
A_i(x, t, \xi) -\theta_i a_i(x, t, \xi)\cdot\xi -\theta_i A_{i, t}(x, t, \xi) t\geq\mu_2 a_i(x, t, \xi)\cdot\xi 
\]
a.e. in $\Omega$ if $\vert (t,\xi)\vert\geq R$, with 
\begin{equation}   \label{thi<pi}
\theta_i < \frac{1}{p_i};
\end{equation}
\item[$(h_6)$] for all $\xi, \xi^{\prime}\in\R^N$, with $\xi\neq\xi^{\prime}$, it is 
\[
[a_i(x, t, \xi) -a_i(x, t, \xi^{\prime})]\cdot[\xi-\xi^{\prime}] >0 \quad \mbox{ a.e. in } \Omega, \ \mbox{ for all } t\in\R.
\]
\end{itemize}

Moreover, let us assume that function $G(x,\bu)$ satisfies not only hypotheses $(g_0)$--$(g_1)$
but also the following Ambrosetti--Rabinowitz type condition:
\begin{itemize}
\item[$(g_2)$] taking $\theta_i$ as in $(h_5)$ for all $i\in \{1,\dots,m\}$, we have that
\[
0 <G(x, \bu)\leq\sum_{i} \theta_i G_{i}(x, \bu) u_i \quad 
\mbox{ if } \vert \bu\vert\geq R, \ \bu = (u_1,\dots,u_m),
\]
for a.e. $x\in\Omega$, with $R > 0$ as in the previous set of hypotheses $(h_3)$--$(h_5)$.
\end{itemize}

\begin{remark}    \label{remmu1}
We note that hypothesis $(h_4)$ is satisfied also if $t = 0$ and $\vert\xi\vert\geq R$,
then from $(h_2)$ we have $\mu_1\leq 1$. Furthermore, hypotheses $(h_4)$ and $(h_5)$ give
\begin{equation}    \label{A13}
A_i(x, t, \xi)\geq\left(\theta_i\mu_1 + \mu_2\right) a_i(x, t, \xi)\cdot \xi  
\quad \mbox{ a.e. in } \Omega \ \mbox{ if } \vert(t, \xi)\vert\geq R,
\end{equation}
whence, from $(h_2)$ we have that
\begin{equation}    \label{gezero}
A_i(x, t, \xi)\geq (\theta_i \mu_1 +\mu_2)\lambda \vert\xi\vert^{p_i}\geq 0 \quad 
\mbox{ a.e. in } \Omega \ \mbox{ if } \vert(t, \xi)\vert\geq R.
\end{equation}
Summing up, from \eqref{gezero} and assumption \eqref{exh_3} 
it follows that a positive constant $\eta_3$ exists such that
\begin{equation}    \label{4.4}
A_i(x, t, \xi)\geq (\theta_i \mu_1 +\mu_2)\lambda \vert\xi\vert^{p_i}-\eta_3 
\quad \mbox{ a.e. in } \Omega \mbox{ for all } (t, \xi)\in\R\times\R^N.
\end{equation}
\end{remark}

\begin{remark}
Taking $i\in\{1,\dots, m\}$, we note that \eqref{conto} in $(h_1)$ is not required as hypothesis
if $(h_2)$--$(h_5)$ and \eqref{conto2} hold.
In fact, in these assumptions not only \eqref{4.4} is satisfied but also direct computations 
imply that
\[
 A_i(x, t, \xi) \le \eta_1 \Phi_2^i(t) + \eta_2 
 + \eta_1 (\Phi_2^i(t) + \phi_2^i(t)) \vert\xi\vert^{p_i}
\]
a.e. in $\Omega$, for all $(t,\xi)\in\R\times\R^N$. Hence, $(h_1)$ can be replaced by the weaker condition
\begin{itemize}
\item[$(h_1')$] assumptions \eqref{conto1} and \eqref{conto2} hold.
\end{itemize}
Furthermore, taking $t = 0$ and $\vert\xi\vert\geq R$ in both \eqref{exh_2} and $(h_5)$, 
from $(h_2)$ it follows $\eta_1 \ge\mu_2 + \theta_i$ so, 
without loss of generality, we can assume that $\mu_2$ is so that 
\[
\eta_1 - \mu_2 - \theta_i > 0.
\]
Thus, in order to give better growth conditions on function $A_i(x, t, \xi)$,
hypotheses \eqref{exh_2} and $(h_5)$ give
\begin{equation}   \label{Alast}
\left(\frac{\eta_1-\mu_2-\theta_i}{\eta_1 \theta_i}\right) A_i(x, t, \xi)
\geq A_{i, t}(x, t, \xi) t \quad \mbox{ a.e. in } \Omega \ \mbox{ if } \vert(t, \xi)\vert\geq R.
\end{equation}
Hence, \eqref{gezero}, \eqref{Alast} with hypotheses \eqref{conto2}, \eqref{exh_2} 
and direct computations imply that $\eta_4 >0$ exists such that
\[
A_i(x, t, \xi) \leq \eta_4 \vert t\vert^{\frac{\eta_1-\mu_2-\theta_i}{\eta_1\theta_i}}\vert\xi\vert^{p_i}
\quad \mbox{ a.e. in } \Omega \ \mbox{ if } \vert t\vert\geq 1, \; \vert\xi\vert\geq R,
\]
then \eqref{A13} gives
\[
a_i(x, t, \xi)\cdot\xi\leq \frac{\eta_4}{\theta_i \mu_1 +\mu_2}
\vert t\vert^{\frac{\eta_1-\mu_2-\theta_i}{\eta_1\theta_i}}\vert\xi\vert^{p_i}
\quad \mbox{ a.e. in } \Omega \ \mbox{ if } \vert t\vert\geq 1, \; \vert\xi\vert\geq R.
\]
\end{remark}

\begin{remark}   \label{RemarkG}
Taking $i\in\{1,\dots, m\}$, from $(g_2)$ we have that
\[
0< G(x, u \,{\bf e}_i)\leq \theta_i G_{i}(x, u\, {\bf e}_i) u \quad\mbox{ for a.e. } x\in\Omega, 
\; \mbox{ if }\ u \in \R,\, \vert u\vert\geq R.
\]
Hence, $(g_0)$ and direct computations imply that $h_i\in L^{\infty}(\Omega)$, 
$h_i(x) >0$ for a.a. $x\in\Omega$, exists such that
\begin{equation}    \label{Gthetai}
G(x, u\, {\bf e}_i)\geq h_i(x) \vert u\vert^{\frac{1}{\theta_i}} \quad\mbox{ for a.a. } x\in\Omega,
\; \mbox{ if }  \ u \in \R,\ \vert u\vert\geq R.
\end{equation}
Thus, if also \eqref{gucr} holds, from \eqref{Gsigma}, \eqref{thi<pi} 
and \eqref{Gthetai} not only we obtain that 
\begin{equation}    \label{minimax}
1 < p_i <\frac{1}{\theta_i}\leq q_i
\end{equation}
but also
\begin{equation}    \label{Gthetaibis}
G(x, u\, {\bf e}_i)\geq h_i(x) \vert u\vert^{\frac{1}{\theta_i}} - \sigma_i
\quad\mbox{ for a.a. } x\in\Omega,
\; \mbox{ all } u\in \R
\end{equation}
for a suitable $\sigma_i >0$.
\end{remark}

\begin{example}   \label{ex0}
Let us consider
\[
G(x, \bu) = \sum_{i} c_i\vert u_i\vert^{q_i} +
c_{\ast} \prod_{i =1}^m\vert u_i\vert^{\gamma_i},
\]
where we assume that
\[
q_i >1, \quad \gamma_i >1 \qquad\hbox{for all $i\in\{1,\dots, m\}$.}
\]
Then, $(g_0)$ is verified and for any $i\in\{1,\dots, m\}$ we have
\[
G_{i}(x, \bu) = c_i q_i\vert u_i\vert^{q_i-2} u_i +
c_{\ast}\gamma_i\vert u_i\vert^{\gamma_i-2} u_i\prod_{\substack{j =1\\j\neq i}}^m \vert u_j\vert^{\gamma_j} 
\quad\mbox{ for a.e. } x\in\Omega.
\]
If, in addition, we suppose that 
\[
\gamma_i < q_i \qquad\hbox{for all $i\in\{1,\dots, m\}$,}
\]
then the generalized Young inequality and direct computations allow us to conclude that
also \eqref{gucr} holds by taking
\[
s_{i,j} = (m-1) \gamma_j \frac{q_i-1}{q_i-\gamma_i} \qquad  \hbox{for all $i$, $j \in\{1,\dots, m\}$, $j\neq i$.}
\]
Finally, if we have that
\[
\sum_{i} \frac{\gamma_i}{q_i}\geq 1, 
\]
then hypothesis $(g_2)$ is verified, too, by taking each $\theta_i \ge \frac{1}{q_i}$.
\end{example}

Up to now, no upper bound is required for the growth of the nonlinear term $G(x, \bu)$. Anyway,
the subcritical assumptions \eqref{crit_exp} and \eqref{crit_expi}
are required for proving the weak Cerami--Palais--Smale condition.

\begin{remark}   \label{rmkcrit}
Let $i$, $j\in\{1,\dots,m\}$, $j \ne i$, be such that
$s_{i, j}$ verifies condition \eqref{crit_expi}. Then, $\tilde{q}_i>0$ exists such that
\begin{equation}  \label{crits11}
1 < \tilde{q}_i <p_i^{\ast}, \quad 0 \le \ws_{i, j}:=\frac{s_{i, j} \tilde{q}_i}{\tilde{q}_i-1} < p_j^{\ast}.
\end{equation}
In fact, if both $p_i<N$ and $p_j<N$, then
\eqref{crit_expi} implies that
\[
p_i p_j^{\ast} - N s_{i, j} >0 \quad\mbox{ and }\quad 
1 \le \frac{p_j^{\ast}}{p_j^{\ast} - s_{i, j}} 
\le \frac{p_i p_j^{\ast}}{p_ip_j^{\ast} - N s_{i, j}} <p_i^{\ast},
\]
hence, $\tilde{q}_i$ exists such that
\begin{equation}  \label{crits12}
\frac{p_i p_j^{\ast}}{p_i p_j^{\ast} - N s_{i, j}} < \tilde{q}_i < p_i^{\ast}
\end{equation}
so that both estimates in \eqref{crits11} hold. On the other hand, 
if $p_i \ge N$, respectively $p_j \ge N$, with $p_i^{\ast} = +\infty$, 
respectively $p_j^{\ast} = +\infty$, and \eqref{0infinito}
imply that the conditions in \eqref{crits11} are less restrictive and 
the existence of $\tilde{q}_i$ and $\ws_{i,j}$ is easier to prove.  \\
\end{remark}

\begin{remark}
Assume that $(g_0)$ and $(g_1)$ hold.
If for every $i$, $j\in\{1,\dots, m\}$, $j\ne i$, by using the same notations in 
Remark \ref{rmkcrit}, from \eqref{crits11} and Young inequality it follows that
\begin{equation}    \label{crit_Young}
\vert u_i\vert\ \vert u_j\vert^{s_{i, j}}\ \leq\ 
\frac{\vert u_i\vert^{\tilde{q}_i}}{\tilde{q}_i} +\frac{\tilde{q}_i-1}{\tilde{q}_i}\vert u_j\vert^{\ws_{i, j}}
\le \ \vert u_i\vert^{\tilde{q}_i} + \vert u_j\vert^{\ws_{i, j}}
\quad \hbox{for all $\bu \in \R^m$.}
\end{equation}
Thus, from \eqref{Gsigma} and \eqref{crit_Young} we obtain that
\[
\vert G(x, \bu)\vert\leq \sigma_1 \sum_{i} \Big(1 + \vert u_i\vert^{q_i} 
+\sum_{j\neq i} (\vert u_i\vert^{\tilde{q}_i} +\vert u_j\vert^{\ws_{i, j}})\Big) 
\quad \hbox{for all $\bu \in \R^m$.}
\]
Hence, setting
\begin{equation}    \label{qsegnatoi}
\overline{q}_i = \max\{ q_i, \tilde{q}_i, \ws_{j,i}:\ 1 \le j\le m,\ j\ne  i\} 
\quad \mbox{ for any } i\in\{1,\dots, m\},
\end{equation}
it follows that
\begin{equation}     \label{Gsigmamax}
\vert G(x, \bu)\vert\leq \sigma_0 \sum_{i} (1+ \vert u_i\vert^{\overline{q}_i})
\quad \hbox{for all $\bu \in \R^m$}
\end{equation}
for a suitable constant $\sigma_0 \ge \sigma_1$.\\
At last, if $(g_2)$ is verified too, then from \eqref{minimax}
and \eqref{crits11} we obtain that
\begin{equation} \label{non_so}
1 < p_i <\frac{1}{\theta_i} \le \overline{q}_i < p_i^{\ast}
\qquad \hbox{for all $i \in \{1,\dots,m\}$.}
\end{equation}
\end{remark}

Now, we show that the $(wCPS)$-- condition holds. To this aim,
the following boundedness result is required (for its proof, see \cite[Theorem II.5.1]{LU}).

\begin{lemma}    \label{Ladyz}
Let $\Omega$ be an open bounded subset of $\R^N$ and consider $u\in W_0^{1, p}(\Omega)$ with $p \le N$. 
Suppose that $\gamma > 0$ and $k_0\in\N$ exist such that 
\[
\int_{\Omega_{k}^{+}} \vert\nabla u\vert^p dx\ \leq\ 
\gamma\left( \int_{\Omega^{+}_{k}} (u - k)^r dx\right)^{\frac{p}{r}} 
+ \gamma \sum_{l=1}^\nu k^{\alpha_l} [\meas(\Omega^{+}_{k})]^{1-\frac{p}{N}+\varepsilon_l}
\]
for all $k\ge k_0$, with $\Omega^{+}_{k} :=\lbrace x\in\Omega : u(x) > k\rbrace$ and $r, \nu, \alpha_l, \varepsilon_l$ 
are positive constants such that
\[
1\leq r < p^{\ast}, \qquad \varepsilon_l > 0, \qquad p\leq\alpha_l < \varepsilon_l p^{\ast} + p.
\]
Then, $\displaystyle\esssup_{\Omega} u$ is bounded from above by a positive constant 
which can be chosen so that it depends only on 
$\meas(\Omega), N, p, \gamma, k_0, r, \nu, \varepsilon_l, \alpha_l, \vert u\vert_{p^{\ast}}$ (eventually,
$\vert u\vert_{q}$ for some $q > r$ if $p^{\ast} = +\infty$).
\end{lemma}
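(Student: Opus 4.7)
The plan is a De Giorgi level-set iteration on the super-level sets $\Omega_k^+$. First I would work with the truncation $u_k := (u-k)^+$, which belongs to $W_0^{1,p}(\Omega)$ and is supported in $\overline{\Omega_k^+}$, with $\nabla u_k = \nabla u \, \chi_{\Omega_k^+}$ a.e. When $p < N$ the Sobolev embedding gives
\begin{equation*}
|u_k|_{p^*}^{\,p} \,\le\, C_S \int_{\Omega_k^+} |\nabla u|^p \, dx;
\end{equation*}
when $p = N$ the same inequality holds with $p^*$ replaced by any finite $q \ge 1$, and one picks $q > r$ (this is precisely the exponent mentioned in the statement for the borderline case). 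In the exposition below I write $p^*$ with the understanding that this substitution is tacit in the limiting case.

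Next, to absorb the first term on the right-hand side of the hypothesis, I would apply H\"older's inequality together with $r < p^*$, obtaining
\begin{equation*}
\left(\int_{\Omega_k^+}(u-k)^r\, dx\right)^{p/r} \,\le\, |u_k|_{p^*}^{\,p}\, [\meas(\Omega_k^+)]^{\,p(1/r - 1/p^*)}.
\end{equation*}
Since $u \in L^{p^*}(\Omega)$, we have $\meas(\Omega_k^+) \to 0$ as $k \to +\infty$; thus for $k$ large enough the prefactor $\gamma C_S [\meas(\Omega_k^+)]^{p(1/r - 1/p^*)}$ is smaller than $1/2$, and combining the Sobolev estimate with the hypothesis yields, for all such $k$,
\begin{equation*}
|u_k|_{p^*}^{\,p} \,\le\, C \sum_{l=1}^{\nu} k^{\alpha_l}\, [\meas(\Omega_k^+)]^{\,1 - p/N + \varepsilon_l}.
\end{equation*}

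Then I would feed this back into the standard Chebyshev-type inequality $(h-k)^{p^*} \meas(\Omega_h^+) \le |u_k|_{p^*}^{p^*}$ valid for $h > k$. Setting $k_n := k_0 + d(1 - 2^{-n})$ with $d > 0$ to be fixed and $\phi_n := \meas(\Omega_{k_n}^+)$, I would obtain a recurrence of the form
\begin{equation*}
\phi_{n+1} \,\le\, \frac{C\, 2^{n p^*}}{d^{\,p^*}} \sum_{l=1}^{\nu} k_n^{\alpha_l p^*/p}\; \phi_n^{\,(1 - p/N + \varepsilon_l)p^*/p}.
\end{equation*}
Using $k_n \le k_0 + d$ and $\phi_n \le \meas(\Omega)$ to redistribute factors, the assumptions $p \le \alpha_l < \varepsilon_l p^* + p$ force the effective exponent of $\phi_n$ to be strictly greater than $1$. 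The classical Stampacchia super-linear decay lemma then yields $\phi_n \to 0$ provided $d$ is chosen large enough in terms of the initial term $\phi_0$, which is controlled by $|u|_{p^*}^{p^*}/k_0^{p^*}$ (or $|u|_q$ in the borderline case). This forces $u \le k_0 + d$ almost everywhere in $\Omega$, which is the required $L^\infty$ bound, and tracking the construction shows the final constant depends only on the parameters listed in the statement.

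The step I expect to be the main obstacle is the last one: managing the multi-term recurrence so that a genuine super-linear exponent emerges from the worst of the $\nu$ terms simultaneously, while ensuring every constant is controlled by the listed data. The interplay between the growth factor $k_n^{\alpha_l p^*/p}$ and the decay factor $\phi_n^{(1-p/N+\varepsilon_l)p^*/p}$ is tight, and the inequalities $p \le \alpha_l < \varepsilon_l p^* + p$ are precisely what make the balance work; verifying this balance carefully, uniformly in $l$, is the heart of the argument.
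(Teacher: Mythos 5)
The paper itself does not prove this lemma: it invokes Ladyzhenskaya--Ural'tseva, Theorem II.5.1, and your De Giorgi--Stampacchia level--set iteration is exactly the strategy behind that classical result. Your first three steps (Sobolev inequality for $(u-k)^+$, H\"older absorption of the $\big(\int_{\Omega_k^+}(u-k)^r\big)^{p/r}$ term using $\meas(\Omega_k^+)\to 0$, and the Chebyshev inequality $(h-k)^{p^*}\meas(\Omega_h^+)\le |(u-k)^+|_{p^*}^{p^*}$) are correct and standard.

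The gap is in the closing step, and it is precisely where you anticipated trouble. With levels $k_n=k_0+d(1-2^{-n})$ and the recursion $\phi_{n+1}\le C\,2^{np^*}d^{-p^*}\sum_l (k_0+d)^{\alpha_l p^*/p}\,\phi_n^{1+\varepsilon_l p^*/p}$, the Stampacchia lemma requires a smallness condition of the type $\phi_0\le c\,\big(d^{p^*}(k_0+d)^{-\alpha_l p^*/p}\big)^{1/\delta_l}$ with $\delta_l=\varepsilon_l p^*/p$. Since the hypothesis gives $\alpha_l\ge p$, i.e. $\alpha_l p^*/p\ge p^*$, the right--hand side does \emph{not} tend to $+\infty$ as $d\to+\infty$ (it is bounded when $\alpha_l=p$ and decreasing when $\alpha_l>p$), so for a fixed starting level and a fixed $\phi_0>0$ no choice of large $d$ closes the argument: ``$d$ large enough in terms of $\phi_0$'' is not the right mechanism. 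The correct mechanism is to raise the \emph{starting} level: take $\hat k\ge k_0$ large, set $d=\hat k$, and use $\phi(\hat k)\le |u|_{p^*}^{p^*}\hat k^{-p^*}$; then the $l$--th smallness requirement carries the net power $\hat k^{\frac{p^*}{p}(\alpha_l-p-\varepsilon_l p^*)}$, whose exponent is negative exactly because $\alpha_l<\varepsilon_l p^*+p$, so all requirements are met for $\hat k$ large depending only on the listed data, and one concludes $u\le 2\hat k$ a.e. Two further cautions: do not collapse the $\nu$ terms into a single worst pair $(\max_l\alpha_l,\min_l\varepsilon_l)$ when ``redistributing factors'', since $\max_l\alpha_l<p+p^*\min_l\varepsilon_l$ need not follow from the per--$l$ hypotheses --- keep the terms paired and impose the $\nu$ smallness conditions separately; and in the borderline case $p=N$ choosing $q>r$ is not enough: $q$ must also be large enough that $\varepsilon_l q/N>1$ and $\alpha_l<\varepsilon_l q+p$ for every $l$, which is possible but should be said.
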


\begin{proposition}     \label{PropwCPS}
Assume that hypotheses $(h_0)$--$(h_6)$ and $(g_0)$--$(g_2)$ hold. 
Then functional $\J$ in \eqref{functional} satisfies condition $(wCPS)$ in $\R$.
\end{proposition}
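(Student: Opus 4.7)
The plan is to follow the five-step scheme already used in \cite{CSS,CP2} for related problems. Let $(\bu_n)_n\subset X$ be a $(CPS)_\beta$-sequence. For the $W$-boundedness, I will test $d\J(\bu_n)$ against $(\theta_1 u_1^n,\dots,\theta_m u_m^n)\in X$: since $\|d\J(\bu_n)\|_{X'}\|\bu_n\|_X\le\|d\J(\bu_n)\|_{X'}(1+\|\bu_n\|_X)\to 0$, the quantity $\J(\bu_n)-d\J(\bu_n)[(\theta_i u_i^n)_i]$ is bounded from above. Hypothesis $(h_5)$ applied pointwise on $\{|(u_i^n,\nabla u_i^n)|\ge R\}$ together with $(h_2)$ produces the lower bound $\mu_2\lambda|\nabla u_i^n|^{p_i}$ on the integrand for each $i$; meanwhile $(g_2)$ forces $\sum_i\theta_i G_i(x,\bu_n)u_i^n-G(x,\bu_n)\ge 0$ on $\{|\bu_n|\ge R\}$; the complementary sets contribute only bounded remainders by $(h_1)$ and $(g_0)$--$(g_1)$. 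This yields uniform control of $\sum_i\|u_i^n\|_{W_i}^{p_i}$.

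Reflexivity of each $W_i$ and the Rellich--Kondrachov theorem then furnish, along a subsequence, $u_i^n\rightharpoonup u_i$ in $W_i$, $u_i^n\to u_i$ in $L^r(\Omega)$ for every $r<p_i^*$, and $u_i^n\to u_i$ a.e. in $\Omega$. Next I would establish the uniform $L^\infty$-bound for $\bu_n$, which I expect to be the core difficulty. For each $i$ and $k\ge R$, I will test $\frac{\partial\J}{\partial u_i}(\bu_n)$ against $(u_i^n-k)^+$ (and symmetrically against $-(u_i^n+k)^-$): bound the principal gradient term from below by $(h_2)$, and absorb $A_{i,t}(u_i^n-k)$ together with $a_i\cdot\nabla u_i^n$ via $(h_4)$ pointwise on $\Omega_{i,k}^+=\{u_i^n>k\}$, yielding the net coercive lower bound $\mu_1\lambda\int_{\Omega_{i,k}^+}|\nabla u_i^n|^{p_i}\,dx$. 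The term $G_i(x,\bu_n)(u_i^n-k)$ is then controlled by $(g_1)$ together with the Young inequality \eqref{crit_Young}, using \eqref{crits11} and \eqref{non_so} to keep all emerging exponents strictly below the critical ones. The remaining dual-pairing is absorbed because $\|d\J(\bu_n)\|_{X'}(1+\|\bu_n\|_X)\to 0$ dominates $\|(u_i^n-k)^+\|_{X_i}$. The resulting inequality is precisely of the form required by Lemma \ref{Ladyz}, which delivers $|u_i^n|_\infty\le M$ uniformly in $n$.

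Once the uniform $L^\infty$-bound is in place, strong convergence in $W$ is obtained via the strict monotonicity $(h_6)$: testing $d\J(\bu_n)-d\J(\bu)$ against $\bu_n-\bu\in X$, which is admissible since $\bu\in L$ by Fatou, the lower-order terms involving $A_{i,t}$ and $G_i$ vanish by Dominated Convergence through \eqref{conto1}, \eqref{gucr} and \eqref{Gsigma}, leaving $\sum_i\int_\Omega[a_i(x,u_i^n,\nabla u_i^n)-a_i(x,u_i,\nabla u_i)]\cdot\nabla(u_i^n-u_i)\,dx\to 0$. The strict monotonicity then forces $\nabla u_i^n\to\nabla u_i$ a.e., and Vitali's theorem combined with the $W$-bound from Step 1 promotes this to strong convergence in $W$. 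Finally Proposition \ref{smooth1} yields $\J(\bu_n)\to\J(\bu)=\beta$ and $d\J(\bu)=0$, confirming Definition \ref{wCPS}. The genuine obstacle throughout is Step 3: matching the intertwined contributions of $A_{i,t}$ and of the cross-coupling terms $|u_j^n|^{s_{i,j}}$, $j\ne i$, to the exact format of Lemma \ref{Ladyz} with parameters $\alpha_l,\varepsilon_l$ satisfying $p_i\le\alpha_l<\varepsilon_l p_i^*+p_i$, which is precisely what condition \eqref{crit_expi} is designed to guarantee.
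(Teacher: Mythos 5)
Your Steps 1--2 (testing $d\J(\bu_n)$ on $(\theta_i u_i^n)_i$ to get $W$--boundedness via $(h_2)$, $(h_5)$, $(g_2)$, then extracting a weak limit) coincide with the paper's argument. The genuine gap is your central claim that Lemma \ref{Ladyz} yields a \emph{uniform} bound $|u_i^n|_\infty\le M$ for the whole sequence. This is not derivable from $(h_0)$--$(h_6)$, $(g_0)$--$(g_2)$ and is in general false: the very reason the $(wCPS)$ condition is introduced is that $(CPS)_\beta$--sequences may be unbounded in $\|\cdot\|_X$ (hence in $L^\infty$, since they are bounded in $W$) while converging only with respect to $\|\cdot\|_W$ (see the example quoted from \cite{CP2017} in Section \ref{abstractsection}); accordingly, Definition \ref{wCPS} only asks that the \emph{limit} lie in $X$. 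The paper's Step 2 proves $u_i\in L^\infty(\Omega)$ for the weak limit only: one tests against $R^+_k u_i^n$, uses $(h_4)$, $(h_2)$, chooses $n\ge n_k$ so that the dual pairing is below $\meas(\Omega^{i,+}_k)$, and passes to the liminf by weak lower semicontinuity, so that the inequality fed into Lemma \ref{Ladyz} concerns the fixed function $u_i$. If instead you try to apply Lemma \ref{Ladyz} to $u_i^n$ for each $n$, the residual term coming from $\|d\J(\bu_n)\|_{X'}(1+\|\bu_n\|_X)\to 0$ is an additive constant in $k$ which is not of the admissible form $k^{\alpha_l}[\meas(\Omega^+_k)]^{1-p_i/N+\varepsilon_l}$, and in any case the resulting bounds would not be the uniform-in-$n$ estimate you assert.

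Everything downstream of that claim then needs repair. In your final step, the convergence $\int_\Omega A_{i,t}(x,u_i^n,\nabla u_i^n)(u_i^n-u_i)\,dx\to 0$ cannot be obtained by Dominated Convergence through \eqref{conto1}: without a uniform $L^\infty$ bound, $\Phi_1^i(u_i^n)$ and $\phi_1^i(u_i^n)$ are not uniformly controlled and $|\nabla u_i^n|^{p_i}$ is only bounded in $L^1(\Omega)$, so no $n$--independent dominating function is available (also, $(h_6)$ is monotonicity in $\xi$ at fixed $t$, so comparing $a_i(x,u_i^n,\nabla u_i^n)$ with $a_i(x,u_i,\nabla u_i)$ requires an extra argument). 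The paper circumvents all of this by introducing the truncations $\T_k\bu_n$ with $k\ge\max\{\|\bu\|_L,R\}+1$ and proving (Steps 3--4, following \cite[Proposition 4.6]{CP2} and \cite{CSS}) that $\Vert d\J(\T_k\bu_n)\Vert_{X'}\to 0$, $\J(\T_k\bu_n)\to\beta$ and $\|\T_k\bu_n-\bu\|_W\to 0$; only then is Proposition \ref{smooth1} applied to the $L$--bounded sequence $(\T_k\bu_n)_n$ to get $\J(\bu)=\beta$ and $d\J(\bu)=0$. This truncation machinery is the technical core of the proof and is entirely absent from your plan.
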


\begin{proof}
Taking $\beta\in\R$, let $(\bu_n)_n \subset X$, with $\bu_n = (u^n_1,\dots, u^n_m)$, be a sequence such that
\begin{equation}  \label{3.15}
\J(\bu_n)\to \beta \quad \mbox{ and }\quad 
\left\Vert d\J(\bu_n)\right\Vert_{X^{\prime}}\left(1 + \Vert \bu_n\Vert_{X}\right)\to 0.
\end{equation}
We have to prove that $\bu=(u_1,\dots, u_m)\in X$ exists such that 
\begin{enumerate}
\item[$(i)$] $\bu_n \to \bu$ strongly in $W$;
\item[$(ii)$] $\J(\bu) = \beta, \  d\J(\bu) = 0$.
\end{enumerate}
To this aim, our proof is divided in the following steps:\\
$1.\;$ $(\bu_n)_n$ is bounded in $W$; hence, up to subsequences, 
$\bu\in W$ exists such that 
\begin{equation}         \label{3.16}
\bu_n\rightharpoonup \bu\ \mbox{weakly in $W$,}
\end{equation}
i.e., $u_i^n \rightharpoonup u_i$ weakly in $W_i$ for all $i \in \{1,\dots,m\}$,   
\begin{equation}  \label{3.17}
u_i^n \to u_i\ \mbox{strongly in } L^{r_i}(\Omega)\; \mbox{if $r_i\in [1, p_i^{\ast}[$,}
\quad\hbox{for all $i \in \{1,\dots,m\}$,}\\
\end{equation}
\begin{equation}        \label{3.18}
\bu_n\to \bu \mbox{ a.e. in } \Omega;
\end{equation}
$2.\;$ $\bu\in L$, then $\bu\in X$;\\
$3.\;$for any $k>0$, let $T_k:\R\rightarrow\R$ and $\T_k: \R^m \to\R^m$
be defined as
\[   
T_k t:= \begin{cases}
t &\hbox{ if } \vert t\vert\leq k\\
k \frac{t}{\vert t\vert} &\hbox{ if } \vert t\vert > k
\end{cases}
\]
and 
\begin{equation}  \label{Tkappa}
\T_k(t_1,\dots, t_m) := (T_k t_1,\dots, T_k t_m),
\end{equation}
then, taking any $k\geq \max\lbrace\| \bu\|_{L}, R\rbrace + 1$ (with $R\geq 1$ as in our set of hypotheses), 
we have
\begin{equation}      \label{step3.1}
\Vert d\J (\T_k\bu_n)\Vert_{X^{\prime}}\to 0
\end{equation}
and
\begin{equation}      \label{step3.2}
\J (\T_k\bu_n)\to \beta;
\end{equation}
$4.\;$ $\Vert\T_k \bu_n - \bu\Vert_{W}\to 0$, then $(i)$ holds;\\
$5.\;$ $(ii)$ is satisfied.
\smallskip

\noindent
For simplicity, here and in the following we will use the notation $(\varepsilon_n)_n$ 
for any infinitesimal sequence depending only on sequence $(\bu_n)_n$, 
$(\varepsilon_{k,n})_n$ for any infinitesimal sequence depending not only on $(\bu_n)_n$ 
but also on a fixed integer $k$. Moreover, $b_l$ will denote any strictly positive constant independent of $n$.\\
{\sl Step 1.} Firstly, we observe that \eqref{dJzstar} and \eqref{3.15} imply that
\begin{equation}\label{parti}
\frac{\partial\J}{\partial u_i}(\bu_n)[u^n_i] = \varepsilon_n
\quad \hbox{for all $i\in\{1,\dots, m\}$.}
\end{equation}
Thus, taking $\theta_i$, $i\in\{1,\dots, m\}$, as in hypotheses $(h_5)$ and $(g_2)$,
and fixing $n\in\N$, from \eqref{functional}, \eqref{dJw}, \eqref{3.15} and \eqref{parti} we have that
\[
\begin{split}
&\beta +\varepsilon_n =\J(\bu_n) -\sum_{i} \theta_i\frac{\partial\J}{\partial u_i}(\bu_n)[u^n_i]\\
&\quad=\sum_{i} \int_{\Omega} \big(A_i(x, u^n_i, \nabla u^n_i) -\theta_i a_i(x, u^n_i, \nabla u^n_i)\cdot\nabla u^n_i
-\theta_i A_{i, t}(x, u^n_i,\nabla u^n_i) u^n_i\big) dx\\
&\qquad +\int_{\Omega} \Big(\sum_{i} \big(\theta_i G_{i}(x, \bu_n) u^n_i\big) - G(x, \bu_n)\Big) dx.
\end{split}
\]
Now, fix $i\in\{1,\dots, m\}$ and set
\[
\Omega_{n,R}^i =\lbrace x\in\Omega:\, \vert(u^n_i(x), \nabla u^n_i(x))\vert >R\rbrace.
\]
Then, we have that
\begin{equation}    \label{misura}
\int_{\Omega\setminus\Omega_{n,R}^i}\vert\nabla u^n_i\vert^{p_i} dx\leq R^{p_i} \meas(\Omega).
\end{equation}
On the other hand, hypothesis $(h_1)$ implies that
\[
\begin{split}
\sum_{i}\int_{\Omega\setminus\Omega_{n, R}^i}| A_i(x, u^n_i, \nabla u^n_i)&-\theta_i a_i(x, u^n_i, \nabla u^n_i)\cdot\nabla u^n_i\\
&-\theta_i A_{i, t}(x, u^n_i, \nabla u^n_i) u^n_i| dx\leq b_1,
\end{split}
\]
while from assumptions $(h_2)$ and $(h_5)$ it follows that
\[
\begin{split}
&\sum_{i} \int_{\Omega_{n,R}^i} \big(A_i(x, u^n_i, \nabla u^n_i) u^n_i 
-\theta_i a_i(x, u^n_i, \nabla u^n_i)\cdot\nabla u^n_i -\theta_i A_{i, t}(x, u^n_i, \nabla u^n_i) u^n_i\big) dx\\
&\qquad\geq\mu_2\sum_{i} \int_{\Omega_{n,R}^i} a_i(x, u^n_i, \nabla u^n_i)\cdot\nabla u^n_i dx
\geq \mu_2\lambda\sum_{i}\int_{\Omega_{n, R}^i}\vert\nabla u^n_i\vert^{p_i} dx.
\end{split}
\]
Moreover, from hypotheses \eqref{gucr}, $(g_2)$ with \eqref{Gsigma} and direct computations we have that
\[
\int_{\Omega}\Big(\Big(\sum_{i} \theta_i G_{i}(x, \bu_n) u^n_i\Big) - G(x, \bu_n) \Big) dx \geq - b_2.
\]
Thus, summing up, from the previous estimates and \eqref{misura}, we obtain that
\[
\beta +\varepsilon_n \geq
\mu_2\lambda\sum_{i} \int_{\Omega_{n, R}^i}\vert\nabla u^n_i\vert^{p_i} dx - b_3
\geq \mu_2 \lambda\sum_{i} \Vert u^n_i\Vert_{W_i}^{p_i} - b_4.
\]
Hence, $(\bu_n)_n$ is bounded in $W$ and so $\bu\in W$ exists such that, up to subsequences,
\eqref{3.16}--\eqref{3.18} hold.\\
{\sl Step 2.} In order to prove that $\bu\in L$, arguing by contradiction, we assume that 
$i\in\{1,\dots, m\}$ exists such that $p_i < N$ (the proof if $p_i =N$ is simpler)
 and $u_i\notin L^{\infty}(\Omega)$, then either
\begin{equation}    \label{sup_u}
\esssup_{\Omega} u_i =+\infty
\end{equation}
or
\begin{equation}  \label{sup_menou}
\esssup_{\Omega} (-u_i) =+\infty.
\end{equation}
If \eqref{sup_u} holds, then for any fixed $k\in\N$ we have that
\begin{equation}   \label{meas>0}
\meas (\Omega^{i,+}_k) >0,\quad 
\hbox{with $\Omega^{i,+}_k =\{x\in\Omega:\ u_i(x) >k\}$.}
\end{equation}
Defining $R^{+}_k: \R\rightarrow \R$ as
\begin{equation}\label{erre_defn}
R^{+}_k t : = \begin{cases}
0 &\hbox{ if } t\leq k\\
t - k &\hbox{ if } t > k
\end{cases},
\end{equation}
from \eqref{3.16} it follows that
\[
R^{+}_k u^n_i\rightharpoonup R^{+}_k u_i\quad\mbox{weakly in } W_i,
\]
whence the sequentially weakly lower semicontinuity of $\Vert\cdot\Vert_{W_i}$ gives
\begin{equation}   \label{liminf1}
\int_{\Omega^{i,+}_k} \vert\nabla u_i\vert^{p_i} dx\leq 
\liminf_{n}\int_{\Omega^{i,+}_{n, k}}\vert\nabla u^n_i\vert^{p_i} dx,
\end{equation}
with $\Omega^{i,+}_{n, k} =\{x\in\Omega:\ u^n_i(x) > k\}$.
On the other hand, from \eqref{dJzstar}, \eqref{3.15} and definition \eqref{erre_defn} 
we have that
\[
\frac{\partial\J}{\partial u_i}(\bu_n)[R^{+}_k u^n_i]\rightarrow 0,
\]
so, from \eqref{meas>0} an integer $n_k\in\N$ exists such that
\begin{equation}    \label{>nk}
\frac{\partial\J}{\partial u_i}(\bu_n)[R^{+}_k u^n_i] < \meas(\Omega^{i,+}_k) 
\quad\mbox{ for all } n\geq n_k.
\end{equation}
Now, taking $k >R$ ($R$ as in our set of hypotheses) and $n \in \N$,
as  $\mu_1\leq 1$ (see Remark \ref{remmu1}) from \eqref{dJw}, $(h_2)$, $(h_4)$ 
and direct calculations it follows that
\[
\begin{split}
\frac{\partial\J}{\partial u_i}(\bu_n)[R^{+}_k u^n_i] =
&\int_{\Omega^{i,+}_{n,k}} \Big(1 - \frac{k}{u^n_i}\Big)
(a_i(x, u^n_i,\nabla u^n_i)\cdot\nabla u^n_i + A_{i, t}(x, u^n_i, \nabla u^n_i) u^n_i) dx \\
&\ + \int_{\Omega^{i,+}_{n,k}} \frac{k}{u^n_i}\ a_i(x, u^n_i,\nabla u^n_i)\cdot\nabla u^n_i dx 
- \int_{\Omega} G_{i}(x, \bu_n) R^{+}_k u^n_i dx\\
\geq\ &\mu_1 \int_{\Omega^{i,+}_{n, k}}a_i(x, u^n_i,\nabla u^n_i)\cdot\nabla u^n_i dx 
-\int_{\Omega} G_{i}(x, \bu_n) R^{+}_{k} u^n_i dx\\
\ge\ &\mu_1\lambda\int_{\Omega^{i,+}_{n,k}}\vert\nabla u^n_i\vert^{p_i} dx -
\int_{\Omega} G_{i}(x, \bu_n) R^{+}_{k} u^n_i dx,
\end{split}
\]
which, together with \eqref{>nk}, implies that
\begin{equation}    \label{mu1lam}
\mu_1\lambda\int_{\Omega^{i,+}_{n, k}} \vert\nabla u^n_i\vert^{p_i} dx
\leq\meas(\Omega^{i,+}_k) +\int_{\Omega} G_{i}(x, \bu_n) R^{+}_k u^n_idx
\end{equation}
for all $n\ge n_k$.
We note that
\begin{equation}   \label{DCTG}
\int_{\Omega} G_{i}(x, \bu_n) R^{+}_k u^n_i dx\longrightarrow 
\int_{\Omega} G_{i}(x, \bu) R^{+}_k u_i dx.
\end{equation}
In fact, from $(g_0)$ and \eqref{3.18} we obtain that
\[
G_{i}(x, \bu_n) R^{+}_k u^n_i\rightarrow G_{i}(x, \bu) R^{+}_k u_i \quad\mbox{ a.e. in } \Omega,
\]
while, since \eqref{crit_expi} implies that suitable exponents can be choosen 
so that \eqref{crits11} holds, from \eqref{gucr}, \eqref{crit_Young} and also
\eqref{crit_exp}, \eqref{crits11}, \eqref{3.17}, \cite[Theorem 4.9]{Br} 
it follows that a function $h\in L^{1}(\Omega)$ exists such that
\[
\vert G_{i}(x, \bu_n) R^{+}_k u^n_i\vert 
\le \sigma \Big(\vert u^n_i\vert +\vert u^n_i\vert^{q_i} + 
\sum_{j\neq i} (\vert u_i^{n}\vert^{\tilde{q}_{i}} +\vert u_j^n\vert^{\ws_{i,j}})\Big) \leq h(x) 
\quad \mbox{ a.e. in } \Omega,
\]
then Dominated Convergence Theorem applies. \\
Thus, summing up, from \eqref{liminf1}, \eqref{mu1lam}, \eqref{DCTG} and, again, 
\eqref{gucr} and \eqref{crit_Young} we have that \eqref{crit_expi} and direct computations
imply that
\begin{equation}    \label{quasicompl}
\int_{\Omega^{i,+}_k}\vert \nabla u_i\vert^{p_i} dx\leq 
b_5 \Big(\meas(\Omega^{i,+}_k) +\int_{\Omega^{i,+}_k}\vert u_i\vert^{\overline{q}_i} dx + 
\sum_{j\neq i} \int_{\Omega^{i,+}_k} \vert u_j\vert^{\ws_{i,j}}dx\Big),
\end{equation}
with $\overline{q}_i$ as in \eqref{qsegnatoi} and $\ws_{i,j}$ as in \eqref{crits11}.\\
At last, H\"older inequality and \eqref{crits11} imply that
\[
\int_{\Omega^{i,+}_k} \vert u_j\vert^{\ws_{i,j}}dx \le 
|u_j|_{p^*_j}^{\ws_{i,j}} [\meas(\Omega^{i,+}_k)]^{1-\frac{\ws_{i,j}}{p^*_j}}
\quad \hbox{for each $j \ne i$,}
\]
while \eqref{non_so} and direct computations give
\[
\int_{\Omega^{i,+}_k}\vert u_i\vert^{\overline{q}_i} dx \le 
2^{\overline{q}_i - 1} |u_i|_{\overline{q}_i}^{\overline{q}_i-p_i} 
\Big(\int_{\Omega^{i,+}_k} (u_i-k)^{\overline{q}_i} dx\Big)^{\frac{p_i}{\overline{q}_i}}
+ 2^{\overline{q}_i - 1} k^{\overline{q}_i} \meas(\Omega^{i,+}_k),
\]
so from \eqref{quasicompl}, Sobolev Embedding Theorems and direct computations
we obtain that
\begin{equation}    \label{compl}
\begin{split}
\int_{\Omega^{i,+}_k}\vert \nabla u_i\vert^{p_i} dx \leq & \ b_6 \Big(\int_{\Omega^{i,+}_k} (u_i-k)^{\overline{q}_i} dx\Big)^{\frac{p_i}{\overline{q}_i}}\\
& + b_6\sum_{j} k^{\alpha_j}  [\meas(\Omega^{i,+}_k)]^{1-\frac{p_{i}}{N} + \eps_j},
\end{split}
\end{equation}
with $b_6 = b_6(\|\bu\|_W) > 0$, where we set
\[
\alpha_j = \left\{\begin{array}{ll}
\overline{q}_i &\hbox{if $j=i$}\\
0 &\hbox{if $j\ne i$}
\end{array}\right.,\qquad
\eps_j = \left\{\begin{array}{ll}
\frac{p_i}{N} &\hbox{if $j=i$}\\
\frac{p_i}{N} - \frac{\ws_{i,j}}{p^*_j} &\hbox{if $j\ne i$}
\end{array}\right. .
\]
From \eqref{crits12} it follows that $\eps_j > 0$ for each $j \ne i$, so  
\eqref{compl} with \eqref{non_so} (here, $\eps_i p^*_i + p_i = p^*_i$) 
allows us to apply Lemma \ref{Ladyz} and $u_i$
is essentially bounded from above in contradiction with \eqref{sup_u}.\\
Similar arguments make us to rule out also \eqref{sup_menou}; hence, it has to be $u_i \in L^\infty(\Omega)$.\\
{\sl Step 3.} In order to prove this statement, we extend the 
main arguments used in the corresponding {\sl Step 3} in
the proof of \cite[Proposition 4.6]{CSS} to our setting
but introducing some technical changes as in \cite[Proposition 4.6]{CP2}.
Anyway, for the sake of completeness, here we give the main tools.\\
Taking $k\geq \max\lbrace\| \bu\|_{L}, R\rbrace + 1$, we define
$R_k: \R \to \R$ and $\er_k:\R^m \to \R^m$ as
\[
R_k t= t - T_k t = \begin{cases}
0 &\hbox{ if } |t|\leq k\\
t - k\frac{t}{|t|} &\hbox{ if } |t|>k
\end{cases} ,
\]
\[
\er_k(t_1,\dots, t_m) = (R_k t_1,\dots, R_k t_m),
\]
and denote
\[
\Omega^{i}_{n, k}:=\lbrace x\in\Omega: \vert u^n_i(x)\vert > k\rbrace \qquad
\hbox{for any $n \in \N$, $i\in\{1,\dots, m\}$.}
\]
By definition, it follows that
\begin{equation}    \label{cappa}
\Vert\T_k\bu_n\Vert_X\leq\Vert\bu_n\Vert_X \quad\mbox{ and }\quad
\Vert\er_k \bu_n\Vert_X\leq\Vert\bu_n\Vert_X \quad \mbox{ for all $n\in\N$.}
\end{equation}
Moreover, we have that $\T_k\bu = \bu$ and $\er_k\bu = \textbf{0}$ a.e. in $\Omega$, so
 from \eqref{3.16}--\eqref{3.18}, in particular, it follows that
\begin{equation}   \label{Tkae}
\T_k\bu_n \to \bu \ \mbox{ a.e. in } \Omega,
\end{equation}
\begin{equation}
\label{Rkstrongly}
\er_k \bu_n\to \textbf{0} \quad
\mbox{ in } L^{r_1}(\Omega)\times\dots\times L^{r_m}(\Omega)
\end{equation}
for any  $(r_1,\dots,r_m) \in [1, p_1^{\ast}[ \times\dots\times [1, p_m^{\ast}[$,
\begin{equation}
\label{limmeas0}
\meas(\Omega^{i}_{n, k})\to 0\quad \mbox{ for all $i\in\{1,\dots, m\}$.}
\end{equation}
Fixing any $i\in\{1,\dots, m\}$, from \eqref{dJzstar}, \eqref{3.15} 
and \eqref{cappa} we have that
\begin{equation}   \label{normRki}
\left\Vert\frac{\partial\J}{\partial u_i}(\bu_n)\right\Vert_{X^{\prime}_i} \Vert R_k u^n_i\Vert_{X_i}\to 0.
\end{equation}
Then, reasoning as in the previous \emph{Step 2} 
but replacing  $R^{+}_k u_i$ with $R_k u_i$, from \eqref{normRki}
it follows that
\begin{equation}\label{come2u}
\begin{split}
\eps_n + \int_{\Omega}G_{i}(x, \bu_n) R_k u^n_i dx \ & = \frac{\partial\J}{\partial u_i}(\bu_n) [R_{k} u^n_i] + \int_{\Omega}G_{i}(x, \bu_n) R_k u^n_i dx\\
&\geq \mu_1\int_{\Omega^{i}_{n, k}} a_i(x, u^n_i, \nabla u^n_i)\cdot\nabla u^n_i dx\\
&\geq \lambda\mu_1\int_{\Omega^{i}_{n, k}}\vert\nabla u^n_i\vert^{p_i}dx.
\end{split}
\end{equation}
Since arguments similar to those ones used for proving \eqref{DCTG} apply, 
from \eqref{Rkstrongly} we obtain that 
\[
\int_{\Omega}G_{i}(x, \bu_n) R_k u^n_i dx \longrightarrow 0,
\]
then \eqref{come2u} implies both
\begin{equation}   \label{Rk_0}
\int_{\Omega^{i}_{n, k}} \vert\nabla u^n_i\vert^{p_i} dx\longrightarrow 0, 
\quad\mbox{ i.e. }\quad \Vert R_k u^n_i\Vert_{W_i} \to 0,
\end{equation}
and
\begin{equation}    \label{aizero}
\int_{\Omega^{i}_{n, k}} a_i(x, u^n_i, \nabla u^n_i)\cdot\nabla u^n_i dx \longrightarrow 0.
\end{equation}
By means of \eqref{dJzstar2}, if we prove that
\[
\left\Vert\frac{\partial\J}{\partial u_i} (\T_k \bu_n)\right\Vert_{X^{\prime}_i}\to 0 
\quad\mbox{ for all } i\in\{1,\dots, m\},
\]
then \eqref{step3.1} holds. 
To this aim, fixing any $i\in\{1,\dots, m\}$, we take $v \in X_i$ such that $\Vert v\Vert_{X_i} =1$. 
Direct computations imply that
\begin{equation}   \label{sumup}
\begin{split}
\frac{\partial\J}{\partial u_i}(\T_k\bu_n)[v] & \ = \frac{\partial\J}{\partial u_i}(\bu_n)[v]\\
&\quad-\int_{\Omega_{n, k}^{i}} \left(a_i(x, u^n_i, \nabla u^n_i)\cdot\nabla v + A_{i, t}(x, u^n_i, \nabla u^n_i) v \right) dx \\
&\quad+ \int_{\Omega^{i}_{n, k}}\left( a_i(x, T_k u^n_i, 0)\cdot\nabla v + A_{i, t}(x, T_k u^n_i, 0) v\right) dx\\
&\quad+ \int_{\Omega}(G_{i}(x, \bu_n) - G_{i}(x, \T_k\bu_n)) v \ dx.
\end{split}
\end{equation}
We observe that $(h_1)$ with $\xi = 0$ and $|T_ku_i^n| \le k$ for all $n \in \N$
imply the boundness of both $A_{i,t}(x, T_k u^n_i, 0)$ and $a_i(x, T_k u^n_i, 0)$ 
in set $\Omega^{i}_{n, k}$; hence, from \eqref{limmeas0} and direct computations 
so to ``erase'' $v$ from the limit, we obtain that
\begin{equation}   \label{unifvi}
\int_{\Omega^{i}_{n, k}}\left( a_i(x, T_k u^n_i, 0)\cdot\nabla v +
 A_{i, t}(x, T_k u^n_i, 0) v\right) dx\rightarrow 0
\end{equation}
uniformly with respect to $v$. On the other hand, Dominated Convergence Theorem implies that
\begin{equation}  \label{GDom}
\int_{\Omega}\vert G_{i}(x, \bu_n) - G_{i}(x, \T_k\bu_n)\vert dx\rightarrow 0.
\end{equation}
In fact, we have that
\[
\begin{split}
\int_{\Omega}|G_{i}(x, \bu_n) - G_{i}(x, \T_k\bu_n)| dx
 \leq\ &\int_{\Omega}\vert G_{i}(x, \bu_n) - G_{i}(x, \bu)\vert dx\\
& \ +\int_{\Omega}\vert G_{i}(x, \T_k\bu_n) - G_{i}(x, \bu)\vert dx,
\end{split}
\]
where $(g_0)$ together with \eqref{3.18}, respectively \eqref{Tkae}, give
\[
G_{i}(x, \bu_n)\rightarrow G_{i}(x, \bu) \quad\mbox{ and }\quad 
G_{i}(x, \T_k\bu_n)\rightarrow G_{i}(x, \bu) \quad \hbox{a.e. in $\Omega$,}
\]
and from \eqref{gucr}, Young inequality, \eqref{crit_exp}, \eqref{crits11},
\eqref{3.17} and \cite[Theorem 4.9]{Br} we obtain that
\[
|G_{i}(x, \bu_n)|\leq b_7\left( 1+\vert u^n_i\vert^{q_i} +
\sum_{j\neq i}\vert u_j^n\vert^{\ws_{i, j}}\right) \le \bar{h}(x) \quad\mbox{ a.e. in $\Omega$}
\]
for a suitable $\bar{h} \in L^1(\Omega)$, while, again, \eqref{gucr} and the boundedness of 
$(\T_k\bu_n)_n$ in $L$ give
\[
\vert G_{i}(x, \T_k\bu_n)\vert\leq b_8 \quad\mbox{ a.e. in $\Omega$,}
\]
so Dominated Convergence Theorem applies and \eqref{GDom} holds. \\
Thus, summing up, from \eqref{sumup}--\eqref{GDom} we obtain that
\begin{equation}   \label{last_int}
\begin{split}
&\left\vert\frac{\partial\J}{\partial u_i}(\T_k\bu_n)[v]\right\vert\leq\varepsilon_{k,n}\\
&\qquad\qquad\qquad\quad+\left\vert\int_{\Omega^{i}_{n,k}}(a_i(x, u^n_i, \nabla u^n_i)\cdot\nabla v +A_{i, t}(x, u^n_i, \nabla u^n_i) v)dx\right\vert,
\end{split}
\end{equation}
where $\varepsilon_{k, n}$ is independent of $v$. 
At last, the estimate of the last integral in \eqref{last_int} can be obtained 
as in the corresponding {\sl Step 3} in the proof of \cite[Proposition 4.6]{CP2} 
but testing $\frac{\partial\J}{\partial u_i}(\T_k\bu_n)$ on the new test functions 
$v R^{+}_k u^n_i$ and $v R^{-}_k u^n_i$, with 
$R^{-}_k: \R \to \R$ such that
\[
R_k^- t= \begin{cases}
0 &\hbox{ if } t \ge - k\\
t + k &\hbox{ if } t < -k
\end{cases}.
\]
Then, \eqref{step3.1} holds.\\
Finally, we have to prove \eqref{step3.2}. 
From \eqref{functional}, \eqref{Tkappa} and direct computations, we have that
\[
\begin{split}
\J(\T_k\bu_n) &= \J(\bu_n) -
\sum_{i}\int_{\Omega^{i}_{n, k}} (A_i(x, u^n_i, \nabla u^n_i) - A_i(x, T_k u^n_i, 0)) dx\\
& \quad +\int_{\Omega}\left(G(x, \bu_n) - G(x, \T_k\bu_n)\right) dx,
\end{split}
\]
where \eqref{exh_2}, \eqref{gezero}, \eqref{aizero}, respectively 
the boundness of $A_i(x, T_k u^n_i, 0)$ and \eqref{limmeas0}, imply that
\[
\int_{\Omega^{i}_{n, k}} A_i(x, u^n_i, \nabla u^n_i)\ dx\longrightarrow 0
\quad\hbox{and}\quad  \int_{\Omega^{i}_{n, k}} A_i(x, T_k u^n_i, 0)\, dx\longrightarrow 0
\]
for all $i \in \{1,\dots,m\}$. On the other hand, from \eqref{Gsigmamax}, \eqref{non_so}, \eqref{3.17}, \eqref{3.18}, \eqref{Tkae} 
and, again,  Dominated Convergence Theorem, we have that
\[
\int_{\Omega}\left(G(x, \bu_n) - G(x, \T_k\bu_n)\right) dx\longrightarrow 0.
\]
Thus, \eqref{step3.2} follows from \eqref{3.15}.\\
{\sl Step 4.} For each $i\in\{1,\dots, m\}$, by using the same arguments as in {\sl Step 4}
 in the proof of \cite[Proposition 4.6]{CP2} but applied to the partial derivative
$\frac{\partial \J}{\partial u_i}(\T_k\bu_n)$, we prove that
\begin{equation}   \label{TkWi}
\Vert T_k u^n_i - u_i\Vert_{W_i}\to 0.
\end{equation}
So, condition $(i)$ follows from \eqref{Rk_0} and \eqref{TkWi}.\\
{\sl Step 5.} By applying Proposition \ref{smooth1} to the uniformly bounded sequence $(\T_k\bu_n)_n$, 
 from $(i)$ and \eqref{Tkae} we have that
\[
\J(\T_k\bu_n)\to \J(\bu) \quad\mbox{ and }\quad\Vert d\J(\T_k\bu_n) -d\J(\bu)\Vert_{X^{\prime}}\to 0,
\]
which, together with \eqref{step3.1} and \eqref{step3.2}, implies $(ii)$. 
\end{proof}


\section{Main results}    \label{sec_main}
In order to prove a multiplicity result, for each $i \in \{1,\dots,m\}$ we make use of the decomposition 
of $W_i$ as introduced in \cite[Section 5]{CP2}. 
More precisely, it is known that the first eigenvalue of
$- \Delta_{p_i}$ in $W_i$ is given by
\begin{equation}\label{autoval_i}
\lambda_{i,1}:=\inf_{u\in W_i\setminus\lbrace 0\rbrace}
\frac{\int_{\Omega}\vert\nabla u\vert^{p_i}dx}{\int_{\Omega}\vert u\vert^{p_i} dx},
\end{equation}
it is simple, strictly positive and isolated and has a unique eigenfunction $\varphi_{i,1}$
such that
\begin{equation}\label{eig}
\varphi_{i,1} > 0 \;\hbox{a.e. in $\Omega$,}\quad \varphi_{i,1} \in L^\infty(\Omega)
\quad\hbox{and}\quad |\varphi_{i,1}|_{p_i}=1
\end{equation}
(see, e.g., \cite{Lin}). Then, a sequence of positive real numbers $(\lambda_{i,n})_n$
and a corresponding sequence of pseudo--eigenfunctions $(\psi_{i,n})_n \subset W_i$
 exist such that
\begin{equation}    \label{lambda_m}
0 <\lambda_{i, 1} <\lambda_{i, 2}\leq\dots\leq\lambda_{i,n}\leq\dots, \quad 
\mbox{ with } \lambda_{i,n}\nearrow +\infty \ \mbox{ as } n\to +\infty,
\end{equation}
and for all $n \in \N$ we have that $\psi_{i,n} \in L^{\infty}(\Omega)$, hence 
$\psi_{i,n} \in X_i$. Moreover, taking $k \in \N$ and
\[
V_{i, k} := {\rm span}\lbrace\psi_{i, 1},\dots,\psi_{i, k}\rbrace,
\]
a suitable (infinite dimensional) topological complement $Y_{i,k}$ can be found in $W_i$ such that 
\[
W_i= V_{i,k}\oplus Y_{i,k}
\]
and the following inequality holds:
\begin{equation}\label{lambdan+1}
\lambda_{i,k+1}\ \int_\Omega|w|^{p_i} dx\ \leq\ \int_\Omega |\nabla w|^{p_i} dx \quad
\hbox{ for all }  w\in Y_{i,k}
\end{equation}
(cf. \cite[Proposition 5.4]{CP2}).
Hence, defining $Y_{k}^{X_i} := Y_{i, k}\cap L^{\infty}(\Omega)\subset X_i$, from the boundedness
of each $\psi_{i,n}$ we have that $V_{i, k}$ is a subspace of $X_i$, too,
and then
\[
X_i= V_{i, k}\oplus Y_k^{X_i},
\]
with
\[
\mbox{dim}V_{i, k} = \mbox{codim}Y_{k}^{X_i} = k.
\]
Thus, by means of \eqref{Wdefn1} we have that
\[
W = (V_{1, k}\times\dots\times V_{m, k})\oplus(Y_{1, k}\times\dots\times Y_{m, k})
\]
and then from \eqref{Xdefn1} it follows that
\[
X= V_k \oplus Y_{k}^{X}
\]
with
\begin{equation}\label{subsp}
\begin{split}
&V_k = V_{1, k}\times\dots\times V_{m, k}, \qquad Y^X_k = Y_{k}^{X_1}\times\dots\times Y_{k}^{X_m},\\
&\mbox{where } {\rm dim} V_k = {\rm codim} Y^X_k <+\infty.
\end{split}
\end{equation}

Now, we are able to state our main results.

\begin{theorem}    \label{ThExist}
For each $i\in\{1,\dots, m\}$ let $p_i >1$ and assume
that $A_i(x, t, \xi)$ satisfies hypotheses $(h_0)$--$(h_6)$. 
Moreover, suppose that a given function $G(x, \bu)$ verifies
$(g_0)$--$(g_2)$. If, in addition, $\mu_3>0$ exists such that
\begin{enumerate}
\item[$(h_7)$] for every $i\in\{1,\dots, m\}$ it is
\[
A_i(x, t, \xi)\geq\mu_3\vert\xi\vert^{p_i} \quad \mbox{ a.e. in } \Omega, 
\ \mbox{ for all } (t, \xi)\in\R\times\R^N;
\]
\item [$(g_3)$] taking $\lambda^*_{1}:= \min\big\lbrace \lambda_{i,1}:\ 1 \le i \le m\big\rbrace$, 
with $\lambda_{i,1}$ as in \eqref{autoval_i}, assume that
\[
\limsup_{\bu\to {\bf 0}} \frac{G(x, \bu)}{\sum_{i} \vert u_i\vert^{p_i}} < 
\mu_3 \lambda^*_{1} \quad \hbox{uniformly a.e. in $\Omega$;}
\]
\end{enumerate}
then functional $\J$ in \eqref{functional} possesses at least one 
nontrivial critical point in $X$; hence, problem \eqref{euler} 
admits a nontrivial weak bounded solution.
\end{theorem}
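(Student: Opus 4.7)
The plan is to apply the generalized Mountain Pass Theorem (Theorem \ref{mountainpass}) to the functional $\J$ on $X$. Proposition \ref{smooth1} already delivers $\J\in\C^1(X,\R)$ and Proposition \ref{PropwCPS} the $(wCPS)$ condition on all of $\R$, hence on $\R_+$. Replacing $\J$ by $\J-\J(\mathbf 0)$ (a harmless additive constant which does not affect critical points) one may assume $\J(\mathbf 0)=0$, so that any critical point produced at strictly positive level is automatically distinct from $\mathbf 0$. It remains to verify the two geometric conditions $(i)$ and $(ii)$ of Theorem \ref{mountainpass}.

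For $(i)$, fix $\eta>0$ so small that $\mu_3\lambda_1^*-\eta>0$ and use $(g_3)$ to find $\delta>0$ with $G(x,\bu)\le(\mu_3\lambda_1^*-\eta)\sum_i|u_i|^{p_i}$ whenever $|\bu|<\delta$. Combining this with the global estimate \eqref{Gsigmamax}, one obtains a constant $C>0$ such that
\[
G(x,\bu)\ \le\ (\mu_3\lambda_1^*-\eta)\sum_i|u_i|^{p_i}\ +\ C\sum_i|u_i|^{\overline q_i}
\]
for a.e.\ $x\in\Omega$ and all $\bu\in\R^m$, where $\overline q_i>p_i$ by \eqref{non_so}. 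Substituting this together with $(h_7)$ into \eqref{functional} and using the variational characterization \eqref{autoval_i} together with the Sobolev embeddings \eqref{Sobpi} yields
\[
\J(\bu)\ \ge\ \frac{\eta}{\lambda_1^*}\sum_i\|u_i\|_{W_i}^{p_i}\ -\ C'\sum_i\|u_i\|_{W_i}^{\overline q_i}.
\]
Since each $\overline q_i>p_i$, the right-hand side is bounded below by some $\varrho_0>0$ on every $W$-sphere of sufficiently small radius $R_0$, yielding $(i)$.

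For $(ii)$, fix any index $i_0\in\{1,\dots,m\}$, let $\varphi:=\varphi_{i_0,1}$ be as in \eqref{eig}, and consider the rays $\bv_t:=t\,\varphi\,\mathbf e_{i_0}\in X$ for $t>0$. By \eqref{Gthetaibis} in Remark \ref{RemarkG},
\[
\int_\Omega G(x,\bv_t)\,dx\ \ge\ t^{1/\theta_{i_0}}\int_\Omega h_{i_0}(x)\,\varphi(x)^{1/\theta_{i_0}}\,dx\ -\ \sigma_{i_0}\meas(\Omega),
\]
where $\int_\Omega h_{i_0}\varphi^{1/\theta_{i_0}}\,dx>0$ because $h_{i_0}>0$ and $\varphi>0$ a.e. The elastic summands with $j\ne i_0$ reduce to $\int A_j(x,0,0)\,dx$, which is bounded by \eqref{exh_3}; for the remaining $\int A_{i_0}(x,t\varphi,t\nabla\varphi)\,dx$ one combines $(h_1)$ on the set $\{|(t\varphi,t\nabla\varphi)|<R\}$ with the refined bound derived from \eqref{A13}--\eqref{Alast} on its complement (see the remark following Remark \ref{remmu1}) to obtain a polynomial-in-$t$ upper bound whose exponent is strictly smaller than $1/\theta_{i_0}$; recalling that $1/\theta_{i_0}>p_{i_0}$ by \eqref{minimax}, we conclude that $\J(\bv_t)\to-\infty$ as $t\to+\infty$, so picking $t$ large enough gives $e:=\bv_t$ with $\|e\|_W>R_0$ and $\J(e)<0<\varrho_0$. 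Theorem \ref{mountainpass} now supplies a critical point $\bu\in X$ with $\J(\bu)\ge\varrho_0>0$, and by \eqref{diff} this $\bu$ is the desired nontrivial weak bounded solution of \eqref{euler}. \emph{The main technical obstacle} lies precisely in the last estimate: hypothesis $(h_1)$ alone bounds $A_{i_0}$ pointwise only through coefficients $\Phi_0^{i_0},\phi_0^{i_0}$ which are merely continuous in $t$ and can grow arbitrarily, so one must exploit the interplay of $(h_2)$--$(h_5)$ (via \eqref{A13} and \eqref{Alast}) to extract a genuine power-of-$t$ bound whose exponent can then be compared against $1/\theta_{i_0}$.
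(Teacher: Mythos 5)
Your overall strategy coincides with the paper's: normalize so that $\J(\mathbf 0)=0$, use Propositions \ref{smooth1} and \ref{PropwCPS}, and verify the two geometric conditions of Theorem \ref{mountainpass}; your verification of condition $(i)$ is essentially the paper's estimate \eqref{5.6conto}--\eqref{geqrho}. The genuine gap is in condition $(ii)$, exactly at the point you flag as the main obstacle. The bound you invoke from the remark following Remark \ref{remmu1}, namely $A_i(x,t,\xi)\le\eta_4\vert t\vert^{\frac{\eta_1-\mu_2-\theta_i}{\eta_1\theta_i}}\vert\xi\vert^{p_i}$ for $\vert t\vert\ge 1$, $\vert\xi\vert\ge R$, does not yield a power of $s$ strictly smaller than $1/\theta_{i_0}$ along the ray: substituting $t=s\varphi_{i_0,1}$, $\xi=s\nabla\varphi_{i_0,1}$, the gradient factor contributes an extra $s^{p_{i_0}}$, so the total exponent is $\frac{\eta_1-\mu_2-\theta_{i_0}}{\eta_1\theta_{i_0}}+p_{i_0}=\frac{1}{\theta_{i_0}}-\frac{\mu_2+\theta_{i_0}}{\eta_1\theta_{i_0}}+p_{i_0}$, which exceeds $\frac{1}{\theta_{i_0}}$ whenever $p_{i_0}>\frac{\mu_2+\theta_{i_0}}{\eta_1\theta_{i_0}}$ (for instance $p_{i_0}=2$, $\theta_{i_0}=1/4$, $\mu_2=0.1$, $\eta_1=10$ gives $3.86+2=5.86>4=1/\theta_{i_0}$). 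Hence, as written, you cannot conclude that $\J(s\varphi_{i_0,1}{\bf e}_{i_0})\to-\infty$.

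What the paper uses instead is the preliminary Proposition stated just before the proofs (taken from \cite[Proposition 6.5]{CP2}): the scaling inequality $A_i(x,st,s\xi)\le s^{\frac{1}{\theta_i}\left(1-\frac{\mu_2}{\eta_1}\right)}A_i(x,t,\xi)$ for $s\ge1$ and $\vert(t,\xi)\vert\ge R$, equivalently the growth bound \eqref{stsec}. The point is that along a ray $t$ and $\xi$ scale together, so the relevant quantity is $\frac{d}{ds}A_i(x,st,s\xi)=\frac1s\big(a_i(x,st,s\xi)\cdot(s\xi)+A_{i,t}(x,st,s\xi)(st)\big)$, which by $(h_5)$ combined with \eqref{exh_2} is at most $\frac{1}{s\,\theta_i}\left(1-\frac{\mu_2}{\eta_1}\right)A_i(x,st,s\xi)$; integrating this differential inequality in $s$ gives the exponent $\frac{1}{\theta_i}\left(1-\frac{\mu_2}{\eta_1}\right)<\frac{1}{\theta_i}$, and then the two terms of \eqref{stsec} grow along the ray with powers $\frac{1}{\theta_{i_0}}\left(1-\frac{\mu_2}{\eta_1}\right)$ and $p_{i_0}$, both strictly below $1/\theta_{i_0}$, so the term $-s^{1/\theta_{i_0}}\int_\Omega h_{i_0}\varphi_{i_0,1}^{1/\theta_{i_0}}dx$ coming from \eqref{Gthetaibis} dominates. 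If you replace your pointwise-product estimate by a citation of that Proposition (or reproduce the Gronwall-type argument above), the rest of your proof goes through exactly as in the paper.
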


\begin{theorem}     \label{ThMolt}
For each $i\in\{1,\dots, m\}$ let $p_i >1$ and
assume that $A_i(x,t,\xi)$ and $G(x, \bu)$ satisfy hypotheses $(h_0)$--$(h_6)$, $(g_0)$--$(g_2)$. 
If we also suppose that
\begin{enumerate}
\item[$(h_8)$] for each $i\in\{1,\dots, m\}$ function $A_i(x, \cdot, \cdot)$ 
is even in $\R\times\R^N$ for a.e. $x\in\Omega$;
\item[$(g_4)$] $\displaystyle\liminf_{|\bu|\to +\infty}\frac{G(x, \bu)}{\sum_{i} \vert u_i\vert^{\frac{1}{\theta_i}}} >0
\quad$ uniformly a.e. in $\Omega$;
\item[$(g_5)$] $G(x,\cdot)$ is even in $\R^m$ for a.e. $x\in\Omega$;
\end{enumerate}
then functional $\J$ in \eqref{functional} has an unbounded sequence of critical 
points $(\bu_n)_n\subset X$ such that $\J(\bu_n)\nearrow +\infty$; 
hence, problem \eqref{euler} admits infinitely many distinct weak bounded solutions.
\end{theorem}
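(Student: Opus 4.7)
The plan is to verify all the hypotheses of Corollary \ref{multiple} for a suitable sequence of levels $\varrho_k \nearrow +\infty$. The $\mathcal{C}^1$--regularity of $\J$ on $X$ comes from Proposition \ref{smooth1}, while Proposition \ref{PropwCPS} provides the $(wCPS)$ condition in $\R_+$; evenness of $\J$ is immediate from $(h_8)$ and $(g_5)$, and up to subtracting the constant $\J({\bf 0})$ (which does not alter critical points) we may assume $\J({\bf 0}) = 0$. To realize $({\mathcal H}_{\varrho_k})$ I would use, for each $k \in \N$, the pseudo--eigenvalue splitting $X = V_k \oplus Y^X_k$ in \eqref{subsp}, setting $V_{\varrho_k} := V_k$ and $Y_{\varrho_k} := Y^X_k$; since $\dim V_k = \codim Y^X_k = k$, condition $(i)$ is automatic, and taking $\M_{\varrho_k}$ to be the boundary of a $\Vert\cdot\Vert_W$--ball of radius $R_k$ ensures $(ii)$.

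For condition $(iii)$ the spectral gap must dominate the nonlinear term on $Y^X_k$. Given $\bu \in Y^X_k$ with $\Vert \bu\Vert_W = R_k$, $(h_7)$ delivers the lower bound $\sum_i \int_\Omega A_i(x, u_i, \nabla u_i)\,dx \geq \mu_3 \sum_i \Vert u_i\Vert_{W_i}^{p_i}$; on the other hand, the subcritical growth \eqref{Gsigmamax} together with an $L^{p_i}$--$L^{p_i^*}$ interpolation with exponents $\overline{q}_i \in (p_i, p_i^*)$ from \eqref{non_so}, feeding in the Poincar\'e--type inequality \eqref{lambdan+1} $|u_i|_{p_i}^{p_i} \leq \lambda_{i,k+1}^{-1}\Vert u_i\Vert_{W_i}^{p_i}$, yields $\int_\Omega G(x, \bu)\,dx \leq C \sum_i \lambda_{i,k+1}^{-\delta_i} \Vert u_i\Vert_{W_i}^{\overline{q}_i}$ for certain $\delta_i > 0$. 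Because $\lambda_{i,k+1} \nearrow +\infty$, a careful choice of $R_k$ (growing moderately with $k$) produces $\J(\bu) \geq \varrho_k$ on $\M_{\varrho_k} \cap Y^X_k$ with $\varrho_k \nearrow +\infty$.

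For $(iv)$ I would work on the finite dimensional space $V_k$, where all norms are equivalent. The Ambrosetti--Rabinowitz bound \eqref{Gthetaibis} coming from $(g_2)$, combined with $(g_4)$, yields $G(x, \bu) \geq c \sum_i |u_i|^{1/\theta_i} - C$ a.e.\ in $\Omega$, so $\int_\Omega G(x, \bu)\,dx$ grows at least like $\sum_i \Vert u_i\Vert_{W_i}^{1/\theta_i}$ on $V_k$; meanwhile, \eqref{conto} together with the norm--equivalence on $V_k$ bounds $\sum_i \int_\Omega A_i \leq C_k(1 + \sum_i \Vert u_i\Vert_{W_i}^{p_i})$ on any ball of $V_k$. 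Since $\frac{1}{\theta_i} > p_i$ by \eqref{minimax}, the $G$--term dominates and $\J(\bu) \to -\infty$ uniformly on the unit sphere of $V_k$ as $\Vert \bu\Vert_X \to +\infty$, giving an $R_{\varrho_k}$ so that $\J(\bu) \leq 0$ whenever $\bu \in V_k$ with $\Vert \bu\Vert_X \geq R_{\varrho_k}$.

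The main obstacle is the joint calibration of the radii and levels as $k$ grows. In $(iii)$ the parameter $R_k$ must be small enough that the spectral weight $\lambda_{i,k+1}^{-\delta_i}$ controls the subcritical $G$--contribution, yet large enough that the resulting $\varrho_k$ diverges; simultaneously the constant $C_k$ in the $A_i$--upper bound on $V_k$ depends on the $L^\infty$--sizes of the pseudo--eigenfunctions spanning $V_k$, which affects how $R_{\varrho_k}$ and $\varrho_k$ couple. The delicate part will be showing these two estimates are mutually compatible with $\varrho_k \to +\infty$, leaning crucially on the strict inequalities $p_i < \overline{q}_i < p^*_i$ recorded in \eqref{non_so} and the strict subcritical growth \eqref{crit_expi}.
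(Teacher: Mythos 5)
Your overall architecture coincides with the paper's: Corollary \ref{multiple} applied with the pseudo--eigenvalue splitting \eqref{subsp}, $\M_{\varrho_k}$ a sphere in the $\Vert\cdot\Vert_W$--norm, condition $(iii)$ via \eqref{Gsigmamax}, interpolation and the spectral inequality \eqref{lambdan+1}, and condition $(iv)$ via superlinearity of $G$ on finite dimensional subspaces. However, there are two concrete problems. First, in your argument for $(iii)$ you invoke $(h_7)$, which is a hypothesis of Theorem \ref{ThExist} only and is \emph{not} assumed in Theorem \ref{ThMolt}. This is repairable: the paper uses instead the lower bound \eqref{4.4} of Remark \ref{remmu1}, $A_i(x,t,\xi)\geq(\theta_i\mu_1+\mu_2)\lambda\vert\xi\vert^{p_i}-\eta_3$, which follows from $(h_2)$--$(h_5)$ and plays exactly the role you assign to $(h_7)$, up to a harmless additive constant.

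The second problem is a genuine gap in your step $(iv)$. You bound $\sum_i\int_\Omega A_i(x,u_i,\nabla u_i)\,dx$ by $C_k\bigl(1+\sum_i\Vert u_i\Vert_{W_i}^{p_i}\bigr)$ using \eqref{conto} and norm equivalence on $V_k$, and then conclude from $\frac{1}{\theta_i}>p_i$ (see \eqref{minimax}) that the $G$--term dominates. But in \eqref{conto} the functions $\Phi_0^i$ and $\phi_0^i$ are only assumed positive and \emph{continuous} in $t$, with no growth restriction; on $V_k$ the best you get from norm equivalence is a bound involving $\Phi_0^i(\vert u_i\vert_\infty)$ and $\phi_0^i(\vert u_i\vert_\infty)$, which may grow faster than any power of $\Vert u_i\Vert_{X_i}$ (e.g. exponentially), so the claimed domination by $\Vert u_i\Vert_{X_i}^{1/\theta_i}$ does not follow. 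The paper closes exactly this point by first proving (from $(h_3)$ and $(h_5)$, via \eqref{Alast}; see the preliminary proposition quoted from \cite[Proposition 6.5]{CP2}) the polynomial estimate \eqref{stsec}, which gives $\int_\Omega A_i\,dx\leq b_1+b_2\Vert u_i\Vert_{X_i}^{\frac{1}{\theta_i}\left(1-\frac{\mu_2}{\eta_1}\right)}+b_2^*\Vert u_i\Vert_{X_i}^{p_i}$, with exponent $\frac{1}{\theta_i}\bigl(1-\frac{\mu_2}{\eta_1}\bigr)$ \emph{strictly} below $\frac{1}{\theta_i}$; only then does the lower bound $G(x,\bu)\geq\bar{\bar{\lambda}}\sum_i\vert u_i\vert^{1/\theta_i}-\bar{\bar{\sigma}}$ (your combination of $(g_4)$ with \eqref{Gsigma}) force $\J\to-\infty$ on finite dimensional subspaces, yielding \eqref{uno1}. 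Without an estimate of the type \eqref{stsec}, your step $(iv)$ cannot be justified under $(h_0)$--$(h_6)$ alone. The remaining calibration of $R_k$ and $\varrho_k$, which you flag as delicate, is handled in the paper by the explicit choice $R_k=\bigl(\tfrac{b_6}{2b_7}\bar{\lambda}_k\bigr)^{1/(\bar q-p)}$ together with $\bar\lambda_k\to+\infty$, and note that \eqref{uno1} holds on \emph{every} finite dimensional subspace with a radius depending only on that subspace, so no compatibility issue between $(iii)$ and $(iv)$ actually arises.
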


From now on, assume that for each $i \in \{1,\dots,m\}$ 
function $A_i(x,t,\xi)$ satisfies $(h_0)$--$(h_6)$
while $G(x, \bu)$ verifies $(g_0)$--$(g_2)$.
Moreover, let $1<p_i< N$ for each $i\in\{1,\dots, m\}$ (otherwise,
the proof is simpler) and, for simplicity, suppose that
\begin{equation}\label{zeroi}
\int_{\Omega}A_i(x,0,\textbf{0}_N) dx = 0\quad
\hbox{for all $i \in \{1,\dots,m\}$,}
\end{equation}
with $\textbf{0}_N = (0,\dots,0) \in \R^N$, and 
\begin{equation}\label{zeroG}
\int_{\Omega}G(x,\textbf{0}) dx = 0
\end{equation}
(otherwise, we replace $\J(\bu)$ with 
$\J(\bu) - \sum_i \int_{\Omega}A_i(x,0,\textbf{0}_N) dx + \int_{\Omega}G(x,\textbf{0}) dx$ 
which has the same differential on $X$).

Firstly, we state the following preliminary result (for the proof, see \cite[Proposition 6.5]{CP2}).

\begin{proposition}
Let $i\in\{1,\dots, m\}$ be fixed.
Then, for any $(t, \xi)\in\R\times\R^N$ with $\vert(t, \xi)\vert\geq R$ we have that
\[
A_i(x, st, s\xi)\leq s^{\frac{1}{\theta_i}\left(1-\frac{\mu_2}{\eta_1}\right)} A_i(x, t, \xi)
\quad \hbox{a.e. in $\Omega$, for all $s\geq 1$,}
\]
with $R$, $\theta_i$, $\mu_2$ and $\eta_1$ as in our set of hypotheses. 
Moreover, some constants $b^*_1$, $b_2^* >0$ 
exist, independent of $i$, such that for all $(t,\xi)\in\R\times\R^N$ it is
\begin{equation}     \label{stsec}
\vert A_i(x, t,\xi)\vert\leq b^*_1\left(1 +\vert t\vert^{\frac{1}{\theta_i}\left(1-\frac{\mu_2}{\eta_1}\right)}\right) 
+ b^*_2 \left(1 +\vert t\vert^{\frac{1}{\theta_i}\left(1-\frac{\mu_2}{\eta_1}\right) - p_i}\right)\vert\xi\vert^{p_i} \ 
\end{equation}
a.e. in $\Omega$, with $\frac{1}{\theta_i}\left(1-\frac{\mu_2}{\eta_1}\right) - p_i >0$ (without loss of generality,
as we can take, a priori, either $\mu_2$ small enough or $\eta_1$ large enough).
\end{proposition}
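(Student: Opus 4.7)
The plan is to prove the two statements in sequence via a single Gronwall-type ODE argument along radial rays in $(t,\xi)$-space, followed by a judicious rescaling that reduces the growth estimate \eqref{stsec} to the basic bound \eqref{conto} on the compact ball $\{|(t,\xi)|\le R\}$.

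For the radial inequality, fix $(t,\xi)$ with $|(t,\xi)|\ge R$ and set $\varphi(s):=A_i(x,st,s\xi)$ for $s\ge 1$. Since $|(st,s\xi)|\ge R$ for all such $s$, \eqref{gezero} gives $\varphi(s)\ge 0$, and the chain rule gives $s\varphi'(s)=A_{i,t}(x,st,s\xi)(st)+a_i(x,st,s\xi)\cdot(s\xi)$. Hypothesis $(h_5)$ applied at $(st,s\xi)$ rearranges to
\[
\theta_i\bigl[A_{i,t}(x,st,s\xi)(st)+a_i(x,st,s\xi)\cdot(s\xi)\bigr]\le A_i(x,st,s\xi)-\mu_2\,a_i(x,st,s\xi)\cdot(s\xi),
\]
and \eqref{exh_2} yields $a_i(x,st,s\xi)\cdot(s\xi)\ge\frac{1}{\eta_1}A_i(x,st,s\xi)$, whence
\[
s\theta_i\,\varphi'(s)\le\Bigl(1-\frac{\mu_2}{\eta_1}\Bigr)\varphi(s).
\]
Setting $\alpha:=\frac{1}{\theta_i}(1-\mu_2/\eta_1)$ and differentiating the auxiliary function $\psi(s):=s^{-\alpha}\varphi(s)$ gives $\psi'(s)\le 0$, hence $\psi(s)\le\psi(1)$, i.e.\ $\varphi(s)\le s^\alpha\varphi(1)$; this trick sidesteps any issue with dividing by $\varphi$ at a zero.

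For \eqref{stsec}, I would distinguish the regimes $|t|\ge R$ and $|t|<R$. In the first, apply the radial inequality with $s=|t|/R\ge 1$ to the vector $(R\operatorname{sgn}(t),R\xi/|t|)$, whose first coordinate has modulus $R$ so that the hypothesis $|(\cdot,\cdot)|\ge R$ holds. This yields $A_i(x,t,\xi)\le(|t|/R)^\alpha A_i(x,R\operatorname{sgn}(t),R\xi/|t|)$, and then $(h_1)$ combined with the continuity of $\Phi_0^i,\phi_0^i$ on $[-R,R]$ bounds the rescaled factor by $C(1+R^{p_i}|\xi|^{p_i}/|t|^{p_i})$, producing a bound of the form $C_1|t|^\alpha+C_2|t|^{\alpha-p_i}|\xi|^{p_i}$ (the absolute value is superfluous since $A_i\ge 0$ by \eqref{gezero} in this regime). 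In the second regime $|t|<R$, the continuity of $\Phi_0^i,\phi_0^i$ on $[-R,R]$ and \eqref{conto} directly give $|A_i(x,t,\xi)|\le M(1+|\xi|^{p_i})$. Combining both regimes and taking $b_1^*,b_2^*$ as the maxima over the finitely many indices $i$ of the constants so produced yields the estimate; the condition $\alpha-p_i>0$ follows from $\theta_i<1/p_i$ after shrinking $\mu_2/\eta_1$, as allowed by the parenthetical remark.

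The main delicacy I anticipate is the ODE step when $\varphi(1)$ happens to vanish, where one cannot directly integrate $(\ln\varphi)'\le\alpha/s$; the workaround via the monotonicity of $\psi(s)=s^{-\alpha}\varphi(s)$ recovers the sharp inequality without any logarithmic manipulation. Everything else reduces to arithmetic with continuous functions on the compact set $[-R,R]$ and to the choice of scaling $s=|t|/R$ that cleanly separates the $|t|$ and $|\xi|$ dependence to match the structure of \eqref{stsec}.
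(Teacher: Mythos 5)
Your argument is correct, and it is essentially the standard proof that the paper outsources to \cite[Proposition 6.5]{CP2}: the differential inequality $s\theta_i\varphi'(s)\le\bigl(1-\tfrac{\mu_2}{\eta_1}\bigr)\varphi(s)$ obtained from $(h_3)$ and $(h_5)$ along rays, integrated via the monotonicity of $s^{-\alpha}\varphi(s)$, followed by rescaling from the set $\{|t|=R\}$ and using \eqref{conto} there. The only refinements worth noting are cosmetic: your choice of base point $(R\operatorname{sgn}(t),R\xi/|t|)$ with $s=|t|/R$ cleanly produces the mixed $|t|^{\alpha}$, $|t|^{\alpha-p_i}|\xi|^{p_i}$ structure, and the $\psi(s)=s^{-\alpha}\varphi(s)$ device correctly avoids dividing by a possibly vanishing $\varphi$.
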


Now, we are able to prove our main results. 

\begin{proof}[of Theorem \ref{ThExist}]
We note that, being $\mu_3 \lambda^*_1 > 0$, then from hypothesis $(g_3)$ 
a constant $\overline{\lambda} \in \R$ exists such that
\begin{equation}   \label{lambdasegnato}
\overline{\lambda} >0 \quad \hbox{and}\quad
\limsup_{\bu\to {\bf 0}} \frac{G(x, \bu)}{\sum_{i} \vert u_i\vert^{p_i}} <
\overline{\lambda}< \mu_3 \lambda^*_1 \quad \hbox{uniformly a.e. in $\Omega$.}
\end{equation}
Thus, a radius $\rho^{\ast} >0$ exists such that
\begin{equation}   \label{Glam}
G(x, \bu)\ \leq\ \overline{\lambda} \sum_{i} \vert u_i\vert^{p_i} 
\quad \mbox{ for a.e. $x\in\Omega\ $ if }\ \vert\bu\vert\leq\rho^{\ast}.
\end{equation}
Now, let $\bu \in \R^m$ be such that $\vert\bu\vert>\rho^{\ast}$,
$\bu = (u_1,\dots,u_m)$.
Then, an integer $i^*\in\{1,\dots, m\}$ 
exists such that $\vert u_{i^*}\vert\geq\frac{\rho^{\ast}}{m}$;
hence,  
$1 \le \left(\frac{m}{\rho^{\ast}}\right)^{\overline{q}_{i^*}}\vert u_{i^*}\vert^{\overline{q}_{i^*}}$. 
Thus, from \eqref{Gsigmamax} 
direct computations allow us to prove the existence of
a constant $\sigma^{\ast} > 0$ such that
\begin{equation}   \label{Glamstar}
|G(x, \bu)| \leq \sigma^{\ast}\sum_{i}\vert u_i\vert^{\overline{q}_i}
\quad \mbox{ for a.e. $x\in\Omega\ $ if }\ \vert\bu\vert > \rho^{\ast}.
\end{equation}
Summing up, from \eqref{Glam} and \eqref{Glamstar}, we have that 
\begin{equation}   \label{Gdis}
G(x, \bu) \leq\overline{\lambda} \ \sum_{i} \vert u_i\vert^{p_i} 
+ \sigma^{\ast}\sum_{i}\vert u_i\vert^{\overline{q}_i}
\quad \mbox{ for a.e. $x\in\Omega\ $, all $\bu \in \R^m$. }
\end{equation}
Then, taking $\bu \in X$,
from \eqref{functional}, assumption $(h_7)$ and inequality \eqref{Gdis} 
we have that
\[
\J(\bu)\geq \sum_{i}\left( \mu_3\int_{\Omega}\vert\nabla u_i\vert^{p_i}dx
 -\overline{\lambda}\int_{\Omega}\vert u_i\vert^{p_i} dx
- \sigma^{\ast} \int_{\Omega}\vert u_i\vert^{\overline{q}_i}dx \right),
\]
or better, \eqref{Sobpi}, \eqref{non_so}, 
\eqref{autoval_i}, the definition of $\lambda^*_1$ and also
\eqref{Wnorm} imply that
\begin{equation}    \label{5.6conto}
\begin{split}
\J(\bu) \ &\geq \ \sum_{i}\left(\Big(\mu_3 - \frac{\overline{\lambda}}{\lambda_{i,1}}\Big) \Vert u_i\Vert_{W_i}^{p_i} -
\bar{\sigma} \Vert u_i\Vert_{W_i}^{\overline{q}_i} \right)\\
&\geq\sum_{i} \left(\Vert u_i\Vert_{W_i}^{p_i}\Big(\mu_3 -\frac{\overline{\lambda}}{\lambda^*_{1}}
- \bar{\sigma} \Vert \bu\Vert_{W}^{\overline{q}_i - p_i}\Big) \right)
\end{split}
\end{equation}
for a suitable $\bar\sigma > 0$.
We note that from \eqref{non_so} and \eqref{lambdasegnato} some constants $R_0$, $\rho_1$ exist such that
\begin{equation}    \label{conto3}
0 < R_0 < 2m \qquad \hbox{and}\qquad
\mu_3 -\frac{\overline{\lambda}}{\lambda_{1}^*} -\bar{\sigma} R_0^{\overline{q}_i -p_i} \ge\ \rho_1 >0
\end{equation}
for all $ i\in\{1,\dots, m\}$. Since \eqref{Wnorm} and $\Vert\bu\Vert_W = R_0$ imply that 
$\Vert u_j\Vert_{W_j} \ge \frac{R_0}{2m}$ for some $j \in \{1,\dots,m\}$,
from \eqref{5.6conto} and \eqref{conto3}
we have that
\[
\J(\bu)\geq\rho_1 \sum_{i} \Vert u_i\Vert_{W_i}^{p_i}
\ge \rho_1 \Vert u_j\Vert_{W_j}^{p_j} \ge 
\rho_1 \left(\frac{R_0}{2m}\right)^{\bar{p}}
\]
with $\bar{p} = \max\{p_i:\ 1\le i\le m\}$.
So, a constant $\rho_0 > 0$ can be found so that
\begin{equation}    \label{geqrho}
\bu \in X, \quad\Vert\bu\Vert_W = R_0\qquad\then\qquad
\J(\bu)\geq\rho_0.
\end{equation}
Finally, in order to prove that also the geometric condition
$(ii)$ in Theorem \ref{mountainpass} is satisfied,
fixing any $i\in\{1,\dots, m\}$, we take $\varphi_{i, 1}\in X_i$ as in \eqref{eig}.
Then, from \eqref{functional}, \eqref{Gthetaibis}, \eqref{zeroi}, \eqref{stsec}
 and direct computations, for any $s>0$ we obtain that
\[
\begin{split}
&\J(s \varphi_{i,1}{\bf e}_i) \ =
\int_\Omega A_i(x,s \varphi_{i,1},s \nabla\varphi_{i,1}) dx +
\sum_{j\ne i} \int_\Omega A_j(x,0,{\bf 0}_N) dx\\&\quad\qquad - \int_\Omega G(x,s \varphi_{i,1}\textbf{e}_i) dx\\
&\qquad\leq\ b_1^* \meas(\Omega) + b_1^* s^{\frac{1}{\theta_i}(1-\frac{\mu_2}{\eta_1})}
\int_{\Omega} |\varphi_{i,1}|^{\frac{1}{\theta_i}(1-\frac{\mu_2}{\eta_1})} dx+ b_2^* s^{p_i} \int_{\Omega} |\nabla\varphi_{i,1}|^{p_i} dx \\
&\quad\qquad+b_2^* s^{\frac{1}{\theta_i}(1-\frac{\mu_2}{\eta_1})} 
\int_{\Omega} |\varphi_{i,1}|^{\frac{1}{\theta_i}(1-\frac{\mu_2}{\eta_1}) - p_i}|\nabla\varphi_{i,1}|^{p_i} dx\\
&\quad\qquad -s^{\frac{1}{\theta_i}}\int_{\Omega} h_i(x) |\varphi_{i,1}|^{\frac{1}{\theta_i}} dx
+ \sigma_i \meas(\Omega),
\end{split}
\]
where \eqref{eig} and Remark \ref{RemarkG} imply that all the integrals are finite
and 
\[
\int_{\Omega} h_i(x) |\varphi_{i,1}|^{\frac{1}{\theta_i}} dx > 0.
\]
Thus, since $\eta_1, \mu_2 >0$, from \eqref{minimax} it follows that
\[
\J(s \varphi_{i,1}{\bf e}_i) \rightarrow -\infty \quad \mbox{ as } s\rightarrow +\infty.
\]
Hence, $v\in X_i$ exists such that
\begin{equation}    \label{ei}
\Vert v {\bf e}_i\Vert_W > R_0\quad\mbox{ and }\quad\J(v {\bf e}_i) < \varrho_0,
\end{equation}
$R_0$ and $\varrho_0$ as in \eqref{geqrho}.\\
So, since \eqref{functional}, \eqref{zeroi} and \eqref{zeroG} imply that
$\J(\textbf{0}) = 0$, by means of Theorem \ref{mountainpass} we have that
Propositions \ref{smooth1} and \ref{PropwCPS} together with 
\eqref{geqrho} and \eqref{ei} imply the existence of a critical point $\bu^{\ast}\in X$ 
such that $\J(\bu^{\ast})\geq\varrho_0 > \J(\bf{0})$. 
\end{proof}

\begin{proof}[of Theorem \ref{ThMolt}]
Since $(h_8)$ and $(g_5)$ imply that $\J$ is an even functional on $X$,
in order to apply Corollary \ref{multiple} we have to prove that 
assumption $({\mathcal H}_\varrho)$ holds for infinitely many $\varrho$.\\
For simplicity, here and in the following, $b_l$ will denote any strictly positive constant 
independent of the index $i \in \{1,\dots,m\}$.\\
Firstly, we prove that,
taking any finite dimensional subspace $V$ of $X$, a radius
$R_V > 0$ exists such that
\begin{equation}\label{uno1}
\J(\bu) \le 0\qquad \hbox{for all $\bu \in V$, $\|\bu\|_X \ge R_V$.}
\end{equation}
To this aim, for any $i\in\{1,\dots, m\}$ and $u_i \in X_i$, from \eqref{stsec},
and, then, \eqref{Xinorm}, direct computations imply that
\begin{equation}\label{Ainf}
\begin{split}
&\int_{\Omega} A_i(x, u_i, \nabla u_i) dx \leq b^*_1\meas(\Omega)\left(1 +\vert u_i\vert_{\infty}^{\frac{1}{\theta_i}\left(1-\frac{\mu_2}{\eta_1}\right)}\right)\\
&+b^*_2\left(1 +\vert u_i\vert_{\infty}^{\frac{1}{\theta_i}\left(1-\frac{\mu_2}{\eta_1}\right)-p_i}\right)\Vert u_i\Vert_{W_i}^{p_i}\le b_1 + b_2 \|u_i\|_{X_i}^{\frac{1}{\theta_i}\left(1-\frac{\mu_2}{\eta_1}\right)} 
+ b_2^*\|u_i\|_{X_i}^{p_i}.
\end{split}
\end{equation}
On the other hand, from hypothesis $(g_4)$ it follows that 
$\bar{\bar{\lambda}}$ exists such that
\[
\liminf_{\vert\bu\vert\to +\infty}\frac{G(x, \bu)}{\sum_{i}\vert u_i\vert^{\frac{1}{\theta_i}}}>\bar{\bar{\lambda}} >0 
\quad\mbox{ uniformly a.e. in }\Omega,
\]
so $R_1 >0$ exists so that
\[
G(x,\bu) \geq \bar{\bar{\lambda}}\ \sum_{i} \vert u_i\vert^{\frac{1}{\theta_i}} 
\quad\mbox{ for a.e. $x\in\Omega$, if }\vert\bu\vert\geq R_1,
\]
while from \eqref{Gsigma} we obtain 
\[
\big|G(x,\bu) - \bar{\bar{\lambda}}\ \sum_{i} \vert u_i\vert^{\frac{1}{\theta_i}}\big| \le \bar{\bar{\sigma}} 
\quad\mbox{ for a.e. $x\in\Omega$, if }\vert\bu\vert\le R_1.
\]
Hence, we have that
\begin{equation}\label{uno2}
G(x,\bu) \geq \bar{\bar{\lambda}}\ \sum_{i} \vert u_i\vert^{\frac{1}{\theta_i}} - \bar{\bar{\sigma}} 
\quad\mbox{ for a.e. $x\in\Omega$, for all $\bu\in \R^m$.}
\end{equation}
Thus, from \eqref{functional}, \eqref{Ainf} and \eqref{uno2} we obtain that
\begin{equation}\label{uno3}
\J(\bu)\leq \sum_{i} \left(b_3 + b_2\|u_i\|_{X_i}^{\frac{1}{\theta_i}\left(1-\frac{\mu_2}{\eta_1}\right)} 
+ b_2^* \|u_i\|_{X_i}^{p_i} - \bar{\bar{\lambda}} \int_\Omega \vert u_i\vert^{\frac{1}{\theta_i}}dx\right)
\end{equation}
for all $\bu \in X$. Now, if $V$ is a finite dimensional subspace of $X$,
$V =V_1\times\dots\times V_m$, for any $i \in \{1,\dots,m\}$
also the projection $V_i$ onto $X_i$ is a finite dimensional subspace,
so all the norms are equivalent and in particular $\nu_i >0$ exists such that
\[
\left(\int_\Omega \vert v\vert^{\frac{1}{\theta_i}}dx\right)^{\theta_i}
\ge  \nu_i \|v\|_{X_i} \qquad \hbox{for all $v \in V_i$.} 
\]
So, from \eqref{uno3} it follows that
\begin{equation}\label{uno4}
\J(\bu)\leq \sum_{i} \left(b_3 + b_2\|u_i\|_{X_i}^{\frac{1}{\theta_i}\left(1-\frac{\mu_2}{\eta_1}\right)} 
+ b_2^* \|u_i\|_{X_i}^{p_i} - b_4 \|u_i\|_{X_i}^{\frac{1}{\theta_i}}\right)
\end{equation}
for all $\bu \in V$ or better, taking $i\in\{1,\dots, m\}$ and
\[
\zeta_i: s \in [0,+\infty[ \mapsto \zeta_i(s) = b_3 + b_2 s^{\frac{1}{\theta_i}\left(1-\frac{\mu_2}{\eta_1}\right)} 
+ b_2^* s^{p_i} - b_4 s^{\frac{1}{\theta_i}} \in \R,
\]
estimate \eqref{uno4} reduces to
\begin{equation}\label{uno5}
\J(\bu)\leq \sum_{i} \zeta_i(\|u_i\|_{X_i})
\qquad \hbox{for all $\bu \in V$.}
\end{equation}
We note that for each $i \in \{1,\dots,m\}$ the continuous function $\zeta_i$ 
is such that $\zeta_i(0) = b_3>0$ and
$\zeta_i(s) \to -\infty$ as $s \to+\infty$. Hence, a constant $b_5> 0$ and
a radius $\bar{R} > 0$ exist such that
\begin{equation}\label{uno6}
\max_{s \ge 0} \zeta_i(s) \le b_5\quad 
\hbox{and}\quad \zeta_i(s) < -m b_5 \;\; \hbox{if $\ s \ge \bar{R}$}\quad
\hbox{for all $i \in \{1,\dots,m\}$.}
\end{equation}
Thus, if $(s_1,\dots,s_m) \in (\R_+)^m$ is such that $\sum_i s_i > m\bar{R}$,
we have that $s_j \ge \bar{R}$ for some index $j \in \{1,\dots,m\}$, 
then \eqref{uno6} and direct computations allow one to prove that
\[
\sum_{i} \zeta_i(s_i) < 0 \quad\hbox{if }\;\ \sum_i s_i > m\bar{R}.
\]
Whence, \eqref{uno1} holds from \eqref{uno5} with $R_V \ge m\bar{R}$,
$s_i = \|u_i\|_{X_i}$.\\
Now, we want to prove that for any fixed $\varrho >0$ 
there exists $k=k(\varrho)\geq 1$ and $R_k > 1$ such that
\begin{equation}\label{uno7}
\bu\in Y_k^X, \quad \Vert \bu\Vert_{W} = R_k
\qquad \implies\qquad \J(\bu)\geq \varrho
\end{equation}
with $Y_k^X$ as in \eqref{subsp}.\\
To this aim, we note that if $\bu \in X$ estimate \eqref{4.4} gives
\[
\int_{\Omega} A_i(x, u_i, \nabla u_i) dx\geq
\lambda(\theta_i \mu_1+\mu_2)\int_{\Omega}\vert\nabla u_i\vert^{p_i} dx -\eta_3\meas(\Omega),
\]
for each $i\in\{1,\dots, m\}$. Then, \eqref{functional} and \eqref{Gsigmamax} imply that
\begin{equation}     \label{J5.5}
\begin{split}
\J(\bu) \ &\geq \ \sum_{i}\left(\lambda(\theta_i \mu_1+\mu_2)\int_{\Omega}\vert\nabla u_i\vert^{p_i} dx
- \eta_3\meas(\Omega)\right.\\
&\left.\qquad\qquad- \sigma_0 \int_{\Omega}\vert u_i\vert^{\overline{q}_i}dx - \sigma_0 \meas(\Omega)\right).
\end{split}
\end{equation}
We note that, for every $i\in\{1,\dots, m\}$, inequality \eqref{non_so} 
allows us to take $0 < r_i < p_i$ such that 
\[
\frac{r_i}{p_i}+\frac{\bar{q}_i-r_i}{p_i^{\ast}} =1, \quad \hbox{i.e.}\quad 
r_i = p_i \frac{p^*_i - \bar{q}_i}{p_i^{\ast} - p_i}, 
\]
so that from \eqref{Sobpi}, standard interpolation arguments and,
fixing any $k \in \N$, condition \eqref{lambdan+1}, it follows that
\begin{equation}   \label{interp}
\int_{\Omega}\vert u_i\vert^{\overline{q}_i}dx \leq
\tau^{\bar{q}_i -r_i}_{i,p_i^{\ast}}\left(\frac{1}{\lambda_{i,k+1}}\right)^{\frac{r_i}{p_i}}\|u_i\|_{W_i}^{\bar{q}_i} 
\quad\mbox{ for all } u_i\in Y_k^{X_i}.
\end{equation}
Thus, from \eqref{J5.5} and \eqref{interp}, taking $b_6$, $b_7$, $b_8 > 0$
such that
\[
b_6 = \min_{1\le i\le m} \lambda(\theta_i \mu_1+\mu_2), \quad
b_7 = \max_{1\le i\le m} \tau^{\bar{q}_i -r_i}_{i,p_i^{\ast}} \sigma_0, \quad
b_8 = m (\eta_3 + \sigma_0) \meas(\Omega),
\]
we obtain that
\begin{equation}     \label{uno9}
\J(\bu)\geq\sum_{i} \left(\|u_i\|_{W_i}^{p_i} \left(b_6 - 
\ \frac{b_7}{\bar{\lambda}_k}\ \|u_i\|_{W_i}^{\bar{q}_i - p_i}\right)\right) - b_8
\quad\mbox{ for all } \bu\in Y_k^{X},
\end{equation}
with
\[
\bar{\lambda}_k =\min_{1\le i\le m} (\lambda_{i, k+1})^{\frac{r_i}{p_i}}.
\]
We note that \eqref{lambda_m} implies 
\begin{equation}   \label{maxmin}
\bar{\lambda}_k \to +\infty\quad \hbox{as}\; k\to +\infty,
\end{equation}
then, if we assume 
\[
R_k = \left(\frac{b_6}{2b_7} \bar{\lambda}_k\right)^{\frac{1}{\bar{q} - p}}, 
\]
with
\[
p =\min_{1\le i\le m} p_i,\qquad \bar{q} =\max_{1\le i\le m} \bar{q}_i,
\]
$\bar{q} > p > 1$ from \eqref{non_so}, limit \eqref{maxmin} gives
\begin{equation}   \label{maxmin1}
R_k \to + \infty\quad \hbox{as}\; k\to +\infty.
\end{equation}
Thus, an integer $k_1 \in \N$ exists such that
for all $k\ge k_1$ it is $R_k \ge 1$
and 
\[
R_k^{\bar{q}_i - p_i} \le R_k^{\bar{q} - p}
\quad \hbox{for all $i\in\{1,\dots, m\}$.}
\]
Or better, if $k_2\ge k_1$ is such that for all $k\ge k_2$
we have $R_k \ge 2 m$,
taking $k\ge k_2$ and $\bu\in Y_k^{X}$ such that
$\|\bu\|_W = R_k$, direct computations imply not only that
\[
b_6 - \ \frac{b_7}{\bar{\lambda}_k}\ \|u_i\|_{W_i}^{\bar{q}_i - p_i}
\ge b_6 - 
\ \frac{b_7}{\bar{\lambda}_k}\ R_k^{\bar{q}_i - p_i} \ge \frac{b_6}{2}
\quad \hbox{for all $i\in\{1,\dots, m\}$,}
\]
but also
\[
\sum_{i} \|u_i\|_{W_i}^{p_i} \ge \left(\frac{R_k}{2 m}\right)^{p}.
\]
Hence, \eqref{uno9} gives 
\[
\J(\bu)\geq \frac{b_6}{2} \left(\frac{R_k}{2 m}\right)^{p} - b_8
\quad \hbox{if $\bu \in Y_k^X$ is such that $\|\bu\|_W = R_k$.}
\]
Thus, taking any $\varrho >0$, \eqref{uno7} follows from \eqref{maxmin1} if $k = k(\varrho) \in \N$ 
is large enough.\\
Moreover,  taking $\bar{k}> k$, 
the $\bar{k}$--dimensional space $V_{\bar{k}}$, defined as in \eqref{subsp},
is such that not only ${\rm codim} Y_{k} < {\rm dim} V_{\bar k}$
but also \eqref{uno1} holds.
Whence, assumption $({\mathcal H}_\varrho)$ is verified
with $\M_\varrho = \{\bu \in X :\ \Vert\bu\Vert_{W} = R_k\}$.\\
Finally, since \eqref{zeroi} and \eqref{zeroG} give
$\J({\bf 0}) = 0$, from Propositions \ref{smooth1},
\ref{PropwCPS} and the arbitrariness of $\varrho$ for condition
$({\mathcal H}_\varrho)$, we have that Corollary \ref{multiple} applies to $\J$ in $X$ 
and a sequence of diverging critical levels exists. 
\end{proof}


\end{document}